\newcommand{\finboxARX}{\finbox} 
\numberwithin{equation}{section}
\newtheorem{theorem}{Theorem}[section]
\newtheorem{proposition}[theorem]{Proposition}
\newtheorem{corollary}[theorem]{Corollary}
\newtheorem{lemma}[theorem]{Lemma}
\newtheorem{remark}{Remark}[section]
\newtheorem{example}{Example}[section]
\newtheorem{claim}{Claim}[section]
\newcommand{\OMIT}[1]{{\bf [OMIT:} #1 \ {\bf --- end OMIT] }}  
   \renewcommand{\OMIT}[1]{}            
\newcommand{\RR}{{\mathbb{R}}}
\newcommand{\ZZ}{{\mathbb{Z}}}
\newcommand{\vecone}{{\bf 1}}
\newcommand{\veczero}{{\bf 0}}
\newcommand{\Lnat}{{L$^{\natural}$}}
\newcommand{\Mnat}{{M$^{\natural}$}}
\newcommand{\LLnat}{{L$^{\natural}_{2}$}}
\newcommand{\MMnat}{{M$^{\natural}_{2}$}}
\newcommand{\LL}{{L$_{2}$}}
\newcommand{\MM}{{M$_{2}$}}
\newcommand{\finbox}{\hspace*{\fill}$\rule{0.2cm}{0.2cm}$}
\newcommand{\todaye}{\the\year/\the\month/\the\day}
\begin{document}

\title{
Decomposition of an Integrally Convex Set 
\\ 
into a Minkowski Sum of 
\\
Bounded and Conic Integrally Convex Sets
}

\author{
Kazuo Murota%
\thanks{
The Institute of Statistical Mathematics,
Tokyo 190-8562, Japan; 
and
Faculty of Economics and Business Administration,
Tokyo Metropolitan University, 
Tokyo 192-0397, Japan,
murota@tmu.ac.jp}
\ and 
Akihisa Tamura%
\thanks{Department of Mathematics, Keio University, 
Yokohama 223-8522, Japan,
aki-tamura@math.keio.ac.jp}
}

\date{June 2023 / October 2023}

\maketitle

\begin{abstract}
Every polyhedron can be decomposed into a Minkowski sum (or vector sum) of 
a bounded polyhedron and a polyhedral cone.
This paper establishes similar statements
for some classes of discrete sets in discrete convex analysis,
such as integrally convex sets,
\Lnat-convex sets, and \Mnat-convex sets.
\end{abstract}

{\bf Keywords}:
Discrete convex analysis,  
Integrally convex set,
\Lnat-convex set, 
\Mnat-convex set,
Minkowski sum,
Characteristic cone.





\newpage



\section{Introduction}
\label{SCintro}

As is well known, every polyhedron can be decomposed into a Minkowski sum (or vector sum) of 
a bounded polyhedron and a polyhedral cone.
The objective of this paper is to establish 
similar decomposition theorems
with additional features of integrality and discrete convexity
using concepts from discrete convex analysis
\cite{Fuj05book,Mdca98,Mdcasiam,Mbonn09,Mdcaeco16}.
Emphasis is laid on integrally convex sets.
This notion in discrete convex analysis
is equivalent, via convex hull, 
to that of box-integer polyhedra in the theory of polyhedra \cite{Sch86,Sch03}
(see Proposition~\ref{PRintpolyIC} for the precise statement).

Integral convexity is a fundamental concept 
introduced by Favati--Tardella \cite{FT90} for functions
on the integer lattice $\ZZ\sp{n}$,
and integrally convex sets are defined in 
\cite[Section 3.4]{Mdcasiam} as the set version of  
integral convexity; see Section~\ref{SCprelimIC} for the precise definition.
Integral convexity encompasses almost all kinds of discrete convexity
proposed so far, 
such as \Lnat-convexity, \Mnat-convexity, \MMnat-convexity, and multimodularity
\cite{Mdcasiam}.
A discrete fixed point theorem was formulated by 
Iimura--Murota--Tamura \cite{IMT05} 
in terms of integrally convex sets 
(see also \cite[Section 11.9]{Mbonn09},
\cite[Section 13.1]{Mdcaeco16} for expositions).
Mathematical properties of integrally convex sets and functions
have been clarified in recent studies
(Moriguchi--Murota \cite{MM19projcnvl},
Moriguchi--Murota--Tamura--Tardella \cite{MMTT19proxIC},
Murota--Tamura \cite{MT20subgrIC,MT22ICfenc}).
The reader is referred to
Murota--Tamura \cite{MT23ICsurv}
for a recent comprehensive survey on integral convexity.

For any sets $S_{1}$, $S_{2} \subseteq \RR\sp{n}$,
we denote their {\em Minkowski sum} (or {\em vector sum})
by $S_{1}+S_{2}$, that is,
\begin{equation*}  
S_{1}+S_{2} = \{ x + y \mid x \in S_{1}, \  y \in S_{2} \} .
\end{equation*}
Let $P \subseteq \RR\sp{n}$ be a polyhedron.
A fundamental fact in the theory of polyhedra says
that it can be represented as $P= Q + C$ 
with a bounded polyhedron $Q$ and a polyhedral cone $C$
(see Section~\ref{SCprelimPolyh} for details).
In this decomposition, the cone $C$ is uniquely determined from $P$,
coinciding with the characteristic (or recession) cone of $P$,
whereas there is some degree of freedom in the choice of $Q$.
We are interested in integrality and discrete convexity
in this decomposition, and our contribution consists of two phases.

In the first phase we consider a box-integer polyhedron $P$ and 
impose an additional condition that $Q$ and $C$ be box-integer polyhedra.
Our first main result, Theorem~\ref{THboxintDec}, states that
this is indeed possible.
Furthermore, it is shown in Theorem~\ref{THlnatmnatDecR} that
if $P$ is an \Lnat-convex (resp., \Mnat-convex) polyhedron,
then we can impose that 
$Q$ and $C$ be \Lnat-convex  (resp., \Mnat-convex).
A technical challenge in establishing 
Theorem~\ref{THboxintDec} for box-integer polyhedra
stems from the lack of `outer description' 
of box-integer polyhedra in terms of inequality systems.
In contrast, inequality systems are available for
\Lnat-convex and \Mnat-convex polyhedra,
which makes the proof of Theorem~\ref{THlnatmnatDecR}
shorter and more transparent.

In the second phase we are concerned with discrete sets $S \subseteq \ZZ\sp{n}$.
Our second main result, 
Theorem~\ref{THicsetDecZ},
states that an integrally convex set 
$S$ can be represented as $S= T + G$ 
with a bounded integrally convex set $T$ 
and a `conic' integrally convex set $G$.
Furthermore, it is shown in Theorem~\ref{THlnatmnatDecZ} that
if $S$ is an \Lnat-convex (resp., \Mnat-convex) set,
then we can impose that 
$T$ and $G$ be \Lnat-convex  (resp., \Mnat-convex).
A technical challenge in the second phase 
is to overcome the well-known difficulty of discreteness 
in the Minkowski summation.
Namely, for discrete sets 
$S_{1}, S_{2} \subseteq \ZZ^{n}$,
the Minkowski sum
$S_{1} + S_{2}$ 
may possibly be different from 
$(\overline{S_{1}} + \overline{S_{2}}) \cap \ZZ\sp{n}$
(see Figure \ref{FGminkowhole} of Example \ref{EXicdim2sumhole}
 for a concrete example).
The possibility of
$(\overline{S_{1}} + \overline{S_{2}}) \cap \ZZ\sp{n} \ne S_{1} + S_{2}$
prevents us to derive the decomposition theorem 
for integrally convex sets 
as a corollary of Theorem~\ref{THboxintDec} for box-integer polyhedra.

\medskip

This paper is organized as follows. 
Section~\ref{SCprelim} is devoted to preliminaries 
on polyhedra and integrally convex sets.
The main results are described in Section~\ref{SCresult}.
Section~\ref{SCresultBI} deals with
subsets of $\RR\sp{n}$ such as
box-integer polyhedra, 
\Lnat-convex polyhedra, and \Mnat-convex polyhedra,
while
Section~\ref{SCresultIC} treats 
subsets of $\ZZ\sp{n}$ such as
integrally convex sets,
\Lnat-convex sets, and \Mnat-convex sets.
The proofs are given in Section~\ref{SCproof},
and Section~\ref{SCconcl} concludes the paper.

\section{Preliminaries}
\label{SCprelim}

\subsection{Polyhedra}
\label{SCprelimPolyh}

A subset $P$ of $\RR\sp{n}$ is called a 
{\em polyhedron}
if it is described by a finite number of linear inequalities,
that is,
$P = \{ x \mid Ax \leq b \}$ for some matrix $A$ and a vector $b$.
In this paper we always assume that a polyhedron is nonempty.
A subset $Q$ of $\RR\sp{n}$ is called a {\em polytope}
if it is the convex hull of a finite number of points,
that is, $Q = \overline{S}$ for a finite subset $S$ of $\RR\sp{n}$,
where $\overline{S}$ denotes the convex hull of $S$.
It is known that a polytope is nothing but a bounded polyhedron. 
A subset $C$ of $\RR\sp{n}$ is called a 
{\em cone}
if $d \in C$ implies $\lambda d \in C$ for all $\lambda \geq 0$.
We follow \cite{Sch86,Sch03} for terminology about polyhedra.

Let $P$ be a polyhedron.
The {\em characteristic cone} of $P$, 
denoted by ${\rm char.cone}\, P$,
is the polyhedral cone given by
\begin{equation} \label{charconedef}
{\rm char.cone}\, P = \{ d \mid x+d \in P \mbox{ for all $x$ in $P$}  \}.
\end{equation}
The characteristic cone is also called the {\em recession cone}.
The following are basic facts about the characteristic cone:
\begin{align}
&
d \in {\rm char.cone}\, P 
\nonumber
\\ & \iff
\mbox{there is an $x$ in $P$ such that
  $x + \lambda d \in P$ for all $\lambda \geq 0$},
\label{charconeLambda}
\\ &
d \in {\rm char.cone}\, P
\nonumber
\\ & \iff
\mbox{for all $x$ in $P$, it holds that $x + \lambda d \in P$ for all $\lambda \geq 0$},
\label{repLMtam1=1005}
\\ &
P + {\rm char.cone}\, P = P,
\label{charconePCP}
\\ &
\mbox{If $P = \{ x \mid Ax \leq b \}$, 
then \ ${\rm char.cone}\, P = \{ d \mid A d \leq 0 \}$}.
\label{charconeAy0}
\end{align}

The following is a fundamental theorem, stating that 
a polyhedron can be decomposed 
into a Minkowski sum of a polytope and a cone.

\begin{proposition}[Decomposition theorem for polyhedra] \label{PRdecthmP} 
\quad 

\noindent
{\rm (1)}
 Every polyhedron $P$ can be represented as 
$P= Q + C$ 
with some polytope $Q$ and polyhedral cone $C$.

\noindent
{\rm (2)}
If $P =  Q + C$, with $Q$ a polytope and $C$ a polyhedral cone, 
then  $P$ is a polyhedron and $C= {\rm char.cone}\, P$.
\finboxARX
\end{proposition}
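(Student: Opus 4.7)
The plan is to follow the classical Motzkin--Weyl strategy, which rests on the equivalence between ``outer'' descriptions (intersection of half-spaces) and ``inner'' descriptions (convex hull of finitely many points plus conical hull of finitely many directions) of polyhedra.

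For part (1), I would homogenize $P$: starting from $P = \{x \mid Ax \leq b\}$, form the polyhedral cone $\widetilde{P} = \{(x,\lambda) \in \RR\sp{n+1} \mid Ax - \lambda b \leq 0,\ \lambda \geq 0\}$. Applying the finite generation theorem for polyhedral cones (Minkowski--Weyl for cones, provable by Fourier--Motzkin elimination), $\widetilde{P}$ equals the conical hull of finitely many vectors. Split these generators into those with positive last coordinate (normalized so that $\lambda = 1$) and those with $\lambda = 0$. Taking $Q$ to be the convex hull of the $x$-parts of the former and $C$ to be the conical hull of the $x$-parts of the latter, the slice $\{x \mid (x,1) \in \widetilde{P}\}$ recovers $P$ and equals $Q + C$.

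For part (2), the first step is to check that $Q + C$ is again a polyhedron. This follows by expressing $Q + C$ as the image of a higher-dimensional polyhedron under a linear map (parametrizing convex combinations in $Q$ and non-negative combinations of the generators of $C$) and invoking the fact that linear images of polyhedra are polyhedra. Once $P$ is known to be a polyhedron, I verify $C = {\rm char.cone}\, P$ by two inclusions. The inclusion $C \subseteq {\rm char.cone}\, P$ is immediate from $P + C = Q + C + C = Q + C = P$, using that $C$ is a cone closed under addition, together with (\ref{charconeLambda}).

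The reverse inclusion ${\rm char.cone}\, P \subseteq C$ is the main obstacle. Given $d \in {\rm char.cone}\, P$ and any fixed $x_{0} \in Q \subseteq P$, we have $x_{0} + \lambda d \in P$ for every $\lambda \geq 0$, so we can write $x_{0} + \lambda d = q_{\lambda} + c_{\lambda}$ with $q_{\lambda} \in Q$ and $c_{\lambda} \in C$. Dividing by $\lambda$ and letting $\lambda \to \infty$, boundedness of $Q$ forces $q_{\lambda}/\lambda \to 0$ and $x_{0}/\lambda \to 0$, so $c_{\lambda}/\lambda \to d$. Since $C$ is a finitely generated (hence closed) cone and is stable under non-negative scalars, the limit $d$ lies in $C$. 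The delicate point is the closedness of $C$, which is essential for passing to the limit, and which is supplied by its finite generation.
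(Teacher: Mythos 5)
The paper does not prove Proposition~\ref{PRdecthmP} at all: it is quoted as a known fundamental fact (the Minkowski--Weyl/Motzkin decomposition theorem), with terminology and background deferred to Schrijver's books, and the statement is closed with the end-of-proof box to signal that no proof is given. Your argument is the standard textbook proof and is correct: homogenization plus finite generation of polyhedral cones for part (1), and for part (2) the representation of $Q+C$ as a linear image of a polyhedron together with the two inclusions, where the only genuinely delicate step --- passing to the limit $c_{\lambda}/\lambda \to d$ --- is correctly justified by the closedness of a finitely generated cone. Two small points worth making explicit if you write this up in full: in part (1) you need the standing nonemptiness assumption on $P$ to guarantee that at least one generator of $\widetilde{P}$ has positive last coordinate (so that $Q \neq \emptyset$), and in part (2) the identity $C + C = C$ (convexity plus the cone property) is what makes $Q + C + C = Q + C$ legitimate; neither is a gap, just a line each.
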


It is emphasized that the choice of the polytope $Q$ in $P= Q + C$  is not unique,
while $C$ is uniquely determined by $P$ as stated in (2).

A polyhedron is said to be
{\em rational}
if it is described by a finite number of linear inequalities with
rational coefficients.
A polyhedron $P$ is an {\em integer polyhedron}
if $P=\overline{P \cap \ZZ\sp{n}}$, i.e., if
it coincides with the convex hull
of the integer points contained in it,
or equivalently, if
$P$ is rational and each face of $P$ contains an integer vector. 
A polyhedron $P$
is called {\em box-integer} if 
$P \cap \{ x \in \RR\sp{n} \mid l   \leq x \leq  u \}$
is an integer polyhedron for each choice of 
integer vectors $l , u \in \ZZ\sp{n}$ with $l  \leq u$
(\cite[Section~5.15]{Sch03}).
We call a subset $B$ of $\RR\sp{n}$
an {\em integral box}
if 
$B = \{ x \in \RR\sp{n} \mid l   \leq x \leq  u \}$
for some integer vectors $l , u \in \ZZ\sp{n}$ with $l  \leq u$.

\subsection{Integrally convex sets}
\label{SCprelimIC}

In this section we introduce the concept of
integrally convex sets, as defined in 
\cite[Section 3.4]{Mdcasiam},
and discuss subtleties related to the Minkowski sum of
integrally convex sets.
The reader is referred to 
Murota--Tamura \cite{MT23ICsurv}
for technical details of integral convexity 
including the most recent results.

For $x \in \RR^{n}$
the {\em integral neighborhood} of $x$
is defined by
\begin{equation} \label{Nxdef}
N(x) = \{ z \in \mathbb{Z}^{n} \mid | x_{i} - z_{i} | < 1 \ (i=1,2,\ldots,n)  \} .
\end{equation}
It is noted that 
strict inequality ``\,$<$\,'' is used in this definition
and 
$N(x)$ admits an alternative expression
\begin{equation}  \label{intneighbordeffloorceil}
N(x) = \{ z \in \ZZ\sp{n} \mid
\lfloor x_{i} \rfloor \leq  z_{i} \leq \lceil x_{i} \rceil  \ \ (i=1,2,\ldots, n) \} ,
\end{equation}
where, for $t \in \RR$ in general, 
$\left\lfloor  t  \right\rfloor$
denotes the largest integer not larger than $t$
(rounding-down to the nearest integer)
and 
$\left\lceil  t   \right\rceil$ 
is the smallest integer not smaller than $t$
(rounding-up to the nearest integer).
That is,
$N(x)$ consists of all integer vectors $z$ 
between
$\left\lfloor x \right\rfloor 
=( \left\lfloor x_{1} \right\rfloor ,\left\lfloor x_{2} \right\rfloor , 
  \ldots, \left\lfloor x_{n} \right\rfloor)$ 
and 
$\left\lceil x \right\rceil 
= ( \left\lceil x_{1} \right\rceil, \left\lceil x_{2} \right\rceil,
   \ldots, \left\lceil x_{n} \right\rceil)$.

Let $S$ be a subset of $\ZZ\sp{n}$ and recall that $\overline{S}$
denotes the convex hull of $S$.
As is well known, $\overline{S}$ coincides with the set of 
all convex combinations of (finitely many) elements of $S$.
For any real vector $x \in \RR^{n}$,
we call the convex hull of $S \cap N(x)$ 
the {\em local convex hull} of $S$ around $x$.
A nonempty set
$S \subseteq \ZZ^{n}$ is said to be 
{\em integrally convex} if
the union of the local convex hulls $\overline{S \cap N(x)}$ over $x \in \RR^{n}$ 
is convex.
In other words, a set $S \subseteq \ZZ^{n}$ is called 
integrally convex if
\begin{equation}  \label{icsetdef0}
 \overline{S} = \bigcup_{x \in \RR\sp{n}} \overline{S \cap N(x)}.
\end{equation}
This condition is equivalent to saying that
every point $x$
in the convex hull of $S$ is contained in the convex hull of 
$S \cap N(x)$, i.e.,
\begin{equation}  \label{icsetdef1}
 x \in \overline{S} \ \Longrightarrow  x \in  \overline{S \cap N(x)} .
\end{equation}
Obviously, every subset of $\{ 0, 1\}\sp{n}$ is integrally convex.

We say that a set
$S \subseteq \ZZ\sp{n}$ is
{\em hole-free} if
\begin{equation}  \label{icsetholefree}
  S = \overline{S} \cap \ZZ\sp{n} .
\end{equation}
It is known that an integrally convex set is hole-free;
see \cite[Proposition~2.2]{MT23ICsurv} for a formal proof.
It is also known 
that the convex hull of an integrally convex set is 
a polyhedron (Murota--Tamura \cite[Section 4.1]{MT20subgrIC}).
However, no characterization is known 
about the inequality systems to describe integrally convex sets.

The concept of integrally convex sets is
closely related (or essentially equivalent) to 
that of box-integer polyhedra as follows.

\begin{proposition}[{\cite[Section 2.2]{Mopernet21}}] \label{PRintpolyIC} 
If a set $S \subseteq \ZZ^{n}$ is integrally convex, then its convex hull
$\overline{S}$ is a box-integer polyhedron
and $S = \overline{S} \cap \ZZ\sp{n}$.
Conversely, if $P$ is a box-integer polyhedron, then 
$P \cap \ZZ\sp{n}$ is an integrally convex set and 
$P = \overline{P \cap \ZZ\sp{n}}$.
\finboxARX
\end{proposition}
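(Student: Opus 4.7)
The two implications share a common engine. For integer vectors $l \leq u$ and any $x^{*} \in \RR^{n}$ with $l \leq x^{*} \leq u$, the integral neighborhood $N(x^{*})$ is contained in the integral box $B = \{y \in \RR^{n} : l \leq y \leq u\}$, because $l \leq \lfloor x^{*} \rfloor$ and $\lceil x^{*} \rceil \leq u$ hold componentwise from $l, u \in \ZZ^{n}$. The plan is to exploit this inclusion on both sides of the equivalence.

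For the forward direction, I assume $S$ is integrally convex. The identity $S = \overline{S} \cap \ZZ^{n}$ is the hole-free property recalled in (\ref{icsetholefree}) and available from \cite{MT23ICsurv}. Moreover $\overline{S}$ is a polyhedron by \cite{MT20subgrIC}, so $\overline{S} \cap B$ is a polyhedron for every integral box $B$; it remains to verify that it is an integer polyhedron. Given $x^{*} \in \overline{S} \cap B$, the defining property (\ref{icsetdef1}) of integral convexity writes $x^{*}$ as a convex combination of points in $S \cap N(x^{*})$, and the neighborhood-in-box observation places each of these integer points in $(\overline{S} \cap B) \cap \ZZ^{n}$. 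This shows $\overline{S} \cap B$ equals the convex hull of its integer points, which is exactly box-integrality.

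For the reverse direction, I assume $P$ is box-integer. For an arbitrary $x \in P$, I apply box-integrality to the tight box determined by $x$ itself, namely $l = \lfloor x \rfloor$ and $u = \lceil x \rceil$, so that $N(x) \subseteq B$ is automatic. Box-integrality produces $x$ as a convex combination of integer points in $P \cap B$, and each such point simultaneously lies in $P \cap \ZZ^{n}$ and in $N(x)$. This yields both $x \in \overline{P \cap \ZZ^{n}}$, whence $P = \overline{P \cap \ZZ^{n}}$ (the reverse inclusion being trivial), and the integral convexity condition (\ref{icsetdef1}) for $P \cap \ZZ^{n}$. I do not anticipate a serious obstacle; the argument is a careful matching of definitions around the neighborhood-in-box inclusion, whose only external inputs are the polyhedrality of $\overline{S}$ from \cite{MT20subgrIC} and the hole-free property from \cite{MT23ICsurv}.
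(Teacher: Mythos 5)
Your argument is correct. The paper does not supply its own proof of this proposition (it is quoted from \cite{Mopernet21}), and your definition-matching argument --- the observation that $N(x)\subseteq B$ whenever $x$ lies in an integral box $B$, applied in one direction to an arbitrary box and in the other to the tight box $\lfloor x\rfloor\leq y\leq\lceil x\rceil$, whose integer points are exactly $N(x)$ --- is the standard route and fills that gap correctly, with the two external inputs (polyhedrality of $\overline{S}$ and the hole-free property) cited appropriately.
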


Minkowski summation is an intriguing operation in discrete setting.
For two (discrete) sets
$S_{1}, S_{2} \subseteq \ZZ\sp{n}$,
in general, we have
\begin{equation} \label{minkowZminkowR1}
\overline{ S_{1} + S_{2}  }
= 
\overline{ S_{1} } + \overline{ S_{2} } 
\end{equation}
(see, e.g., \cite[Proposition 3.17(4)]{Mdcasiam}).
In contrast, the naive looking relation
\begin{equation} \label{convminkowsumG}
   S_{1}+S_{2}  = ( \overline{S_{1}+ S_{2}}) \cap \ZZ\sp{n} 
\end{equation}
is not always true, as Example~\ref{EXicdim2sumhole} below shows.

\begin{example}[{\cite[Example 3.15]{Mdcasiam}}] \rm \label{EXicdim2sumhole}
The Minkowski sum of
$S_{1} = \{ (0,0), (1,1) \}$
and
$S_{2} = \{ (1,0), (0,1) \}$
is equal to 
$S_{1}+S_{2} = \{ (1,0), (0,1), (2,1), (1,2) \}$,
for which
$(1,1) \in (\overline{S_{1}+S_{2}}) \setminus (S_{1}+S_{2})$.
That is, the Minkowski sum $S_{1}+S_{2}$ has a `hole' at $(1,1)$.
See Figure~\ref{FGminkowhole}.
\finbox
\end{example}

\begin{figure}
\centering
\includegraphics[height=30mm]{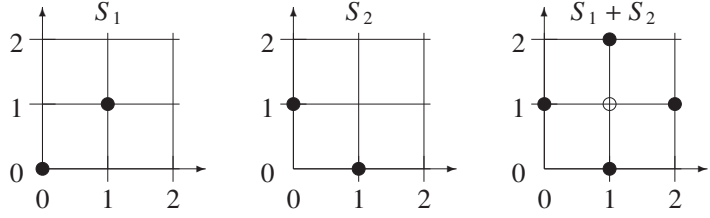}
\caption{Minkowski sum of discrete sets}
\label{FGminkowhole}
\end{figure}

It may be said that 
if \eqref{convminkowsumG} is true
for some class of discrete convex sets, 
this equality captures a certain essence of the discrete convexity in question.
For example,
\eqref{convminkowsumG} is true for two \Mnat-convex sets,
since the Minkowski sum of two \Mnat-convex sets remains to be \Mnat-convex
(\cite[Section 4.6]{Mdcasiam}, \cite[Section 3.5]{Msurvop21}).
The identity \eqref{convminkowsumG} also holds for two \Lnat-convex sets,
since the Minkowski sum of two \Lnat-convex sets is integrally convex
\cite[Theorem 8.42]{Mdcasiam},
although it is not necessarily \Lnat-convex.

For the Minkowski sum of integrally convex sets $S_{1}$ and $S_{2}$,
we observe the following.
\begin{itemize}
\item
$S_{1} + S_{2}$ may have a `hole', that is,
\eqref{convminkowsumG} may fail
(see Example~\ref{EXicdim2sumhole}).

\item
$S_{1} + S_{2}$ may not be integrally convex
(see Example~\ref{EXicdim2sumhole}).

\item
$\overline{(S_{1} + S_{2})} \cap \ZZ\sp{n}$ 
may not be integrally convex
(see Example~\ref{EXicdim3sum} below).
\end{itemize}

\begin{example} \rm \label{EXicdim3sum}
Consider
$S_{1} = \{ (1,0,0), (0,1,0), (0,0,1) \}$
and
$S_{2} = \{ (0,0,0),  \allowbreak (1,1,1) \}$.
Their Minkowski sum is given by 
$S_{1}+S_{2} = \{ (1,0,0), (0,1,0), 
      \allowbreak  (0,0,1), \allowbreak  (2,1,1), 
      \allowbreak  (1,2,1), (1,1,2) \}$.
Let 
$S = \overline{(S_{1} + S_{2})} \cap \ZZ\sp{3}$
and consider
$x = [ (1,0,0) + (1,1,2) ] /2  = (1, 1/2, 1)$
belonging to $\overline{S}$.
We have
$N(x) = \{ (1,0,1),  \allowbreak  (1,1,1)  \}$,
$N(x) \cap S = \{ (1,1,1)  \}$,
and $x \notin \overline{N(x) \cap S}$.
Thus the condition \eqref{icsetdef1} for integral convexity of $S$ is violated.
This example also shows that the Minkowski sum 
of box-integer polyhedra is not necessarily box-integer;
see also Remark~\ref{RMboxintMinkow}. 
\finbox
\end{example}

Discrepancy between
$S_{1}+S_{2}$ and $\overline{S_{1}+ S_{2}}$
has attracted considerable attention 
in (ordinary) convex analysis,
leading to the Shapley--Folkman theorem,
which has  applications in economics, optimization, etc.
A recent paper \cite{MT23shafo} of the present authors
shows a Shapley--Folkman-type theorem for integrally convex sets.

\section{Results}
\label{SCresult}

\subsection{Decomposition of box-integer polyhedra}
\label{SCresultBI}

In this section we describe our first main result
(Theorem~\ref{THboxintDec}),
a decomposition theorem for box-integer polyhedra.
The proof of this theorem relies
on the following technical results
(in their equivalent reformulations in 
Propositions \ref{PRreconeICgenerator} and \ref{PRreconeIC}; 
see Figure~\ref{FGprfstruct} in Section \ref{SCproof}).

\begin{proposition} \label{PRreconeBIgenerator} 
The characteristic cone of a box-integer polyhedron 
is generated by $\{ -1,0,  \allowbreak   +1 \}$-vectors.
\finboxARX
\end{proposition}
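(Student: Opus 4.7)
The plan is to suppose, toward a contradiction, that some primitive integer vector $d$ on an extreme ray of $C$ satisfies $\|d\|_{\infty} \geq 2$, and to construct from $d$ an integer box $B$ such that $P \cap B$ would acquire a non-integer vertex, violating box-integrality. Since $C$ is rational polyhedral, proving that every primitive integer vector on an extreme ray lies in $\{-1,0,+1\}^n$ (together with an analogous statement for the lineality space) suffices.

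I would first reduce to the case that $P$ is pointed; then $C$ is generated by primitive integer vectors on its extreme rays. By the standard correspondence between extreme rays of the characteristic cone and unbounded edges of a pointed polyhedron, I would pick a vertex $v$ of $P$ such that $v + \RR_{\geq 0}\, d$ is a $1$-dimensional face of $P$. The vertex $v$ is automatically integer: the $n$ tight $P$-constraints at $v$ remain tight in $P \cap B$ for any integer box $B$ containing $v$, forcing $v$ to be an integer vertex of the integer polyhedron $P \cap B$.

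Now, assuming $d_k \geq 2$ for some coordinate $k$, set $p := v + (1/d_k)\, d$. Then $p_k = v_k + 1 \in \ZZ$, while primitivity of $d$ combined with $d_k \geq 2$ forces some $p_i = v_i + d_i/d_k$ to be non-integer (otherwise $d_k$ would divide $\gcd(d_1,\dots,d_n) = 1$). I would then intersect $P$ with the integer ``slab'' box $B$ defined by $x_k = v_k + 1$ and $\lfloor p_i \rfloor \leq x_i \leq \lceil p_i \rceil$ for $i \ne k$, and carry out a tight-constraint count at $p$: the $n-1$ $P$-constraints defining the $1$-dimensional face are tight (their normals spanning $d^{\perp}$), and the slab constraint contributes the normal $e_k$, which is independent of $d^{\perp}$ precisely because $d_k \neq 0$. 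Hence $p$ is a vertex of $P \cap B$, which is an integer polyhedron by box-integrality --- contradicting the non-integrality of $p$.

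The main obstacle will be the non-pointed case, where $P$ has no vertices and the face-based choice of $v$ is unavailable. I plan to address this by proving separately that the lineality space $L$ of $P$ (equal to that of $C$) admits a basis in $\{-1,0,+1\}^n$, using a variant of the above construction in which an integer point $x_0$ of $P$ lying on a minimal face of $P$ replaces $v$; the key observation is that for any $\ell \in L$, every constraint tight at $x_0$ remains tight throughout the line $x_0 + \RR\, \ell$, since $A_j \ell = 0$ for all rows $A_j$ of the defining matrix. Combining this lineality analysis with the extreme-ray argument applied on a transverse cross-section yields the full generating set in $\{-1,0,+1\}^n$ for $C$.
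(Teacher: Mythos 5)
Your pointed-case argument is correct and takes a genuinely different route from the paper. The paper never touches extreme rays or vertices: it rephrases the claim via integral convexity (Proposition~\ref{PRreconeICgenerator}) and proves, through the separating-hyperplane Lemma~\ref{LMtam2=1005} on $\{0,1\}\sp{m}$ and a lengthy induction that repeatedly moves the base point $x\sp{0}$ (Proposition~\ref{PRconeICgenerator}), that every direction $d$ of the characteristic cone with $\|d\|_{\infty}=1$ is a convex combination of vectors of $N(d)$ lying in the cone. Your route---producing a non-integer vertex of $P\cap B$ on an unbounded edge $v+\RR_{\geq 0}d$ whenever the primitive edge direction has $\|d\|_{\infty}\geq 2$---is much shorter where it applies: the tight-normal count ($n-1$ independent normals in $d\sp{\perp}$ from the equality set of the edge, plus $e_k\notin d\sp{\perp}$) is sound, the integrality of $v$ follows as you say, and the rationality of $C$ and the existence of an unbounded edge in each extreme-ray direction are standard facts that deserve a sentence each.

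The genuine gap is the non-pointed case, which is not vacuous (e.g.\ $P=\{x\in\RR\sp{3}\mid x_1\leq x_2\}$ is box-integer with two-dimensional lineality space). In your lineality-space variant the constraints tight on the minimal face $x\sp{0}+L$ have normals spanning only $L\sp{\perp}$, of dimension $n-\dim L$; adding $e_k$ from the slab $x_k=(x\sp{0})_k+1$, together with the normals $e_i$ for those coordinates where $p_i$ happens to be integral, does not in general yield $n$ independent normals once $\dim L\geq 2$. Then $p=x\sp{0}+\ell/\ell_k$ need not be a vertex of $P\cap B$, and its non-integrality contradicts nothing, since the minimal face of $P\cap B$ containing $p$ may contain integer points. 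The same dimension deficit recurs for the generators of $C$ modulo $L$, whose witnessing faces of $P$ have dimension $\dim L+1\geq 2$, and ``the extreme-ray argument on a transverse cross-section'' is not available as stated: a cross-section such as $C\cap L\sp{\perp}$ is not obtained by intersecting with a coordinate box and inherits no box-integrality. The lineality part can be repaired---a minimal face $F=x\sp{0}+L$ satisfies that $F\cap B$ is a face of the integer polyhedron $P\cap B$ for every integral box $B$, so $F$ is itself box-integer, and the edges at the vertex $x\sp{0}$ of $F\cap B_O$ for each unit orthant box $B_O$ give $\{-1,0,+1\}$-directions generating $L\cap O$---but the corresponding step for the non-lineality rays of $C$ still lacks an argument, so the proposal is incomplete as written.
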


\begin{proposition} \label{PRreconeBI}
The characteristic cone of a box-integer polyhedron is box-integer.
\finboxARX
\end{proposition}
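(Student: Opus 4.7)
The plan is to combine Proposition \ref{PRreconeBIgenerator}, which already provides that $C = \mathrm{char.cone}\, P$ is generated by $\{-1, 0, +1\}$-vectors, with the purely cone-theoretic fact that \emph{any polyhedral cone generated by $\{-1, 0, +1\}$-vectors is box-integer}. This latter statement is a classical result in polyhedral combinatorics (appearing, for instance, as Theorem 5.35 in volume B of Schrijver's \emph{Combinatorial Optimization}), so once invoked, Proposition \ref{PRreconeBI} follows immediately.

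For a self-contained argument, I would fix an arbitrary integer box $B = \{x : l \le x \le u\}$ and show that every vertex $v$ of the bounded polytope $C \cap B$ is integer. Such a vertex is uniquely determined by $n$ linearly independent active constraints, each either a facet equation of $C$ of the form $a^{\top} x = 0$ (with primitive integer normal $a$ and right-hand side $0$) or a box-face equation $x_i = l_i$ or $x_i = u_i$ (with integer right-hand side). Consequently, $v$ is a rational vector whose denominator is controlled by the determinant of the resulting $n \times n$ coefficient matrix, and it suffices to show that this determinant is $\pm 1$.

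The main obstacle is precisely this unimodularity step: one must show that facet normals of a cone generated by $\{-1, 0, +1\}$-vectors, together with any subset of standard basis vectors arising from active box constraints, always form a unimodular collection. This is a nontrivial combinatorial linear-algebra statement that does not follow automatically from the $\{-1, 0, +1\}$-structure of the generators alone. I would expect the paper to handle it either by proving Propositions \ref{PRreconeBIgenerator} and \ref{PRreconeBI} in tandem by induction on the dimension $n$ (as hinted by the promised proof-structure figure in Section \ref{SCproof}), or by adapting the classical duality between inner (generator) and outer (facet) descriptions of $\{-1, 0, +1\}$-generated cones that underlies Schrijver's theorem.
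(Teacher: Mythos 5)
Your plan rests on the claim that every polyhedral cone generated by $\{-1,0,+1\}$-vectors is box-integer, and that claim is false. The paper itself exhibits a counterexample in Remark~\ref{RM01genNonIC}: the cone $C$ generated by $(1,1,0,1)$, $(0,1,1,1)$, $(1,0,1,1)$ has $\{0,1\}$-generators, yet for $x=(1,1,1,3/2)\in C$ one finds $N(x)\cap C=\emptyset$, so $C\cap\ZZ^{4}$ is not integrally convex and $C$ is not box-integer. Consequently no such theorem can appear in Schrijver, and Proposition~\ref{PRreconeBI} cannot be deduced from Proposition~\ref{PRreconeBIgenerator} plus a generator-shape argument alone. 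Your own third paragraph concedes that the unimodularity step ``does not follow automatically from the $\{-1,0,+1\}$-structure of the generators alone,'' but that concession is fatal to the plan rather than a deferrable technicality: the missing step is not merely nontrivial, it is false at the claimed level of generality, so the hypothesis that $C$ is the characteristic cone of a box-integer polyhedron must enter the argument in an essential way.

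The paper's actual proof is quite different. It restates the claim as Proposition~\ref{PRreconeIC} (for $S$ integrally convex, $C=\mathrm{char.cone}\,\overline{S}$ has $C\cap\ZZ^{n}$ integrally convex) and argues by contradiction: assuming $C\cap\ZZ^{n}$ is not integrally convex, it selects a witness point $y^{0}$ in $D=x^{0}+C$ violating the local convex hull condition with $\overline{N(y^{0})}$ of minimal dimension, builds a separating linear functional $a$ from the inequalities of $D$ tight at $y^{0}$, maximizes $a^{\top}x$ over a bounded box-integer piece $Q$ of $\overline{S}=Q+C$ to obtain an integral optimizer $x^{*}$, and translates to $y^{*}=y^{0}+(x^{*}-x^{0})$, which then violates the integral convexity of $S$ itself. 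This transport of the defect from the cone back into $S$ is precisely where the box-integrality of $P$ (not just the shape of the generators of $C$) is used, and it is the ingredient your proposal lacks.
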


The proofs of these propositions
are quite long and involved,
probably because no characterization is known 
about inequality systems to describe box-integer polyhedra.
The proofs of Propositions \ref{PRreconeBIgenerator} and \ref{PRreconeBI} 
are given in Sections \ref{SCproofrecone} and \ref{SCproofPRreconeBI}, respectively.
Our decomposition theorem for box-integer polyhedra is as follows.

\begin{theorem} \label{THboxintDec}
 Every box-integer polyhedron $P$ can be represented as 
\begin{equation} \label{SQCboxintR}
 P = Q +  C 
\end{equation}
with a bounded box-integer polyhedron $Q$ and a box-integer polyhedral cone $C$.
\end{theorem}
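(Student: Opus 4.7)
The plan is to combine the classical decomposition from Proposition~\ref{PRdecthmP} with Proposition~\ref{PRreconeBI}, which already supplies the desired box-integer cone factor $C = {\rm char.cone}\,P$, and then to replace the polytope factor by a sufficiently large ``slice'' of $P$ itself. First I would apply Proposition~\ref{PRdecthmP}(1) to write $P = Q_{0} + C$ for some polytope $Q_{0}$ and some polyhedral cone $C$; by Proposition~\ref{PRdecthmP}(2), $C$ is necessarily the characteristic cone of $P$, and by Proposition~\ref{PRreconeBI} it is box-integer. So only the polytope factor needs to be upgraded.

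For the upgrade, I would choose an integral box $B = \{x \in \RR\sp{n} \mid l \leq x \leq u\}$ large enough that $Q_{0} \subseteq B$ (possible since $Q_{0}$ is bounded; just round its coordinate extremes outwards to integers), and set $Q := P \cap B$. Then $Q$ is clearly a bounded polyhedron. To see that $Q$ is box-integer, take any integral box $B'$: since $B \cap B'$ is again an integral box and $P$ is box-integer, $Q \cap B' = P \cap (B \cap B')$ is an integer polyhedron. It remains to verify $P = Q + C$. The inclusion $Q + C \subseteq P + C = P$ is immediate from \eqref{charconePCP}. For the reverse, given $x \in P$, decompose $x = q_{0} + c$ with $q_{0} \in Q_{0}$ and $c \in C$; since $0 \in C$ we get $q_{0} = q_{0} + 0 \in Q_{0} + C = P$, and since $Q_{0} \subseteq B$ we get $q_{0} \in B$, so $q_{0} \in P \cap B = Q$ and $x \in Q + C$.

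The main obstacle in this theorem is therefore not the assembly sketched above, which is essentially a one-line consequence of intersecting $P$ with a large integral box, but rather the preparation of Proposition~\ref{PRreconeBI} (and of Proposition~\ref{PRreconeBIgenerator} that supports it). Because box-integer polyhedra admit no known clean inequality description, one cannot read off the recession cone's box-integrality from the defining system of $P$ as one would via \eqref{charconeAy0} in the rational case; all the real technical work lies in establishing that the characteristic cone inherits box-integrality geometrically, and the present theorem is then a short corollary.
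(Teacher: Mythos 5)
Your proposal is correct and follows essentially the same route as the paper: decompose $P = \hat{Q} + C$ via Proposition~\ref{PRdecthmP}, invoke Proposition~\ref{PRreconeBI} for the box-integrality of $C$, and replace the polytope factor by $Q := P \cap B$ for a large integral box $B \supseteq \hat{Q}$, verifying $Q + C = P$ exactly as you do. Your added remark on why $Q$ is box-integer (intersecting two integral boxes gives an integral box) is a detail the paper leaves implicit, but the argument is the same.
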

\begin{proof}
By Proposition~\ref{PRdecthmP},
we can decompose $P$ as
$P = \hat{Q} +  C$,
where $\hat{Q}$ is a polytope and 
$C$ is the characteristic cone of $P$.
The cone $C$ is box-integer by Proposition~\ref{PRreconeBI}.
Take a bounded integral box $B$ containing $\hat{Q}$
and define
$Q := P \cap B$,
which is a bounded box-integer polyhedron.
Since
\[
Q = P \cap B = 
(\hat{Q} +  C) \cap B \supseteq \hat{Q} \cap B = \hat{Q},
\]
we obtain
\[
Q + C \supseteq  \hat{Q} + C = P.
\]
The reverse inclusion $Q + C \subseteq P$
follows from $Q \subseteq P$
and $P + C = P$ in \eqref{charconePCP}
as $Q + C \subseteq P + C = P$.
\end{proof}

\begin{remark} \rm \label{RMboxintMinkow}  
In view of Proposition~\ref{PRdecthmP}(2)
we may be tempted to imagine that 
if $Q$ is a bounded box-integer polyhedron
and $C$ is a box-integer polyhedral cone,
then $Q + C$ is a box-integer polyhedron.
But this is not the case.
A counterexample can be constructed from Example \ref{EXicdim3sum}.
Let $Q$ be the convex hull of  
$S_{1} = \{ (1,0,0), (0,1,0), (0,0,1) \}$
and
$C$ be the polyhedral cone generated by
$S'_{2} = \{ (1,1,1) \}$, 
that is, $C = \{ \lambda (1,1,1) \mid \lambda \geq 0 \}$. 
Both $Q$ and $C$ are box-integer, but $Q + C$ is not.
Indeed,
$T = (Q + C) \cap \ZZ\sp{3}$ is not integrally convex,
because
$x = [  (1,0,0) +  (0,0,1) ] /2 + (1,1,1)/2 
= (1, 1/2, 1) \in \overline{T}$,
$N(x) = \{ (1,0,1), (1,1,1)  \}$,
$N(x) \cap T = \{ (1,1,1)  \}$,
and $x \notin \overline{N(x) \cap T}$.
\finbox
\end{remark}

\begin{remark} \rm \label{RM01genNonIC}  
By Proposition~\ref{PRreconeBIgenerator},
a box-integer cone is generated by $\{ -1,0,  \allowbreak   +1 \}$-vectors,
but the latter property does not characterize a box-integer cone.
Consider the cone $C$ generated by 
$(1,1,0,1)$, $(0,1,1,1)$, $(1,0,1,1)$, that is,
\begin{align*} 
 C  
&= \{ x \mid x = \alpha_1 (1,1,0,1) + \alpha_2 (0,1,1,1) + \alpha_3 (1,0,1,1),
\ \ \alpha_1 , \alpha_2 , \alpha_3 \geq 0 \}
\nonumber \\ 
&= \{ x \mid x =(\alpha_1 + \alpha_3, \alpha_1 + \alpha_2, 
    \alpha_2 + \alpha_3,   \alpha_1 + \alpha_2 + \alpha_3) :
\ \ \alpha_1 , \alpha_2 , \alpha_3 \geq 0 \}.
\end{align*} 
For  $\alpha_1 = \alpha_2 = \alpha_3 = 1/2$, we have
$x = (1,1,1,3/2)$
and 
$N(x) = \{ (1,1,1,2), \allowbreak  (1,1,1,1) \} $.
But
$ (1,1,1,2) \notin C$ and $(1,1,1,1) \notin C$,
and hence
$N(x) \cap C = \emptyset$.
This shows that
$C \cap \ZZ^4$ is not integrally convex, and hence
$C$ is not box-integer.
\finbox
\end{remark}

Theorem~\ref{THboxintDec} can be adapted to 
some classes of integer polyhedra
treated in discrete convex analysis,
such as \Lnat-convex and \Mnat-convex polyhedra.
An {\em \Lnat-convex polyhedron} 
is, by definition, an integer polyhedron obtained as the convex hull of
an \Lnat-convex set.
It is known that 
an \Lnat-convex polyhedron $P$ can be described as 
\begin{equation} \label{ineqLnat}
P = \{ x \in \RR\sp{n} \mid 
 l_{i} \leq  x_{i} \  (i \in I), 
 x_{j} \leq u_{j} \ (j \in J), 
 x_{j} - x_{i} \leq d_{ij} \  ((i,j) \in E)
 \}
\end{equation}
for some 
$I, J \subseteq \{ 1,2,\ldots, n\}$,
$E \subseteq \{ 1,2,\ldots, n\} \times \{ 1,2,\ldots, n\}$,
$l_{i} \in \ZZ$ $(i \in I)$,
$u_{j} \in \ZZ$ $(j \in J)$,
and 
$d_{ij} \in \ZZ$ $((i,j) \in E)$,
and the converse is also true.
An {\em \Lnat-convex cone} means an \Lnat-convex polyhedron that is a cone.
An {\em \Mnat-convex polyhedron} is a synonym of an integral generalized polymatroid,
and hence an \Mnat-convex polyhedron $P$ is described as
\begin{equation} \label{ineqMnat}
P = \{ x \in \RR\sp{n} \mid \mu(X) \leq x(X) \leq \rho(X) \ 
(\forall X \subseteq \{ 1,2,\ldots, n\}) \},
\end{equation}
where $x(X) = \sum_{i \in X} x_{i}$,
for a (strong or paramodular) pair of 
an integer-valued supermodular function $\mu$ 
and an integer-valued submodular function $\rho$
(cf., \cite[Section 3.5(a)]{Fuj05book}, \cite[Section 4.7]{Mdcasiam});
$\mu$ and $\rho$ are allowed to take $-\infty$ and $+\infty$, respectively.
An {\em \Mnat-convex cone} is defined in an obvious manner.
Other kinds of polyhedra
(such as 
\LLnat-convex polyhedron, \MMnat-convex polyhedron, and multimodular polyhedron)
are defined similarly from the corresponding notions for sets of integer vectors.
More precisely,  an \LLnat-convex set is defined as the Minkowski sum 
of two \Lnat-convex sets and an \LLnat-convex polyhedron is
the convex hull of an \LLnat-convex set,
implying that an \LLnat-convex polyhedron can also be defined as
the Minkowski sum of two \Lnat-convex polyhedra.
Similarly, an \MMnat-convex set is defined as the intersection of two \Mnat-convex sets 
and an \MMnat-convex polyhedron is
the convex hull of an \MMnat-convex set;
then it is known (cf., e.g., \cite[Theorem 4.22]{Mdcasiam}) 
that an \MMnat-convex polyhedron can also be defined as
the intersection of two \Mnat-convex polyhedra.

The adaptation of Theorem~\ref{THboxintDec}
to specific classes is given in Theorem~\ref{THlnatmnatDecR} below.
It should be clear that,
although \Lnat-convex polyhedra, etc., 
constitute subclasses of box-integer polyhedra,
Theorem~\ref{THboxintDec} does not imply 
the corresponding statements for these subclasses.
It is worth noting that the proofs 
for these special cases  
do not rely on Theorem~\ref{THboxintDec} and that they
are shorter and simpler
because of the inequality descriptions 
known for these special cases (see Murota \cite{Mdcasiam}, 
Moriguchi--Murota \cite[Table 1]{MM22L2poly},
Murota--Tamura \cite[Table 1]{MT23ICsurv}).

\begin{theorem} \label{THlnatmnatDecR}
\quad

\noindent
{\rm (1)} 
 Every \Lnat-convex polyhedron $P$ can be represented as 
$P= Q + C$ 
with a bounded \Lnat-convex polyhedron $Q$ and an \Lnat-convex cone $C$.

\noindent
{\rm (2)} 
 Every \LLnat-convex polyhedron $P$ can be represented as 
$P= Q + C$ 
with a bounded \LLnat-convex polyhedron $Q$ and an \LLnat-convex cone $C$.

\noindent
{\rm (3)} 
 Every \Mnat-convex polyhedron $P$ can be represented as 
$P= Q + C$ 
with a bounded \Mnat-convex polyhedron $Q$ and an \Mnat-convex cone $C$.
Similarly for an M-convex polyhedron $P$,
with $Q$ and $C$ being M-convex.

\noindent
{\rm (4)} 
 Every \MMnat-convex polyhedron $P$ can be represented as 
$P= Q + C$ 
with a bounded \MMnat-convex polyhedron $Q$ and an \MMnat-convex cone $C$.
Similarly for an \MM-convex polyhedron $P$, with $Q$ and $C$ being \MM-convex.

\noindent
{\rm (5)} 
 Every multimodular polyhedron $P$ can be represented as 
$P= Q + C$ 
with a bounded multimodular polyhedron $Q$ and a multimodular cone $C$.
\end{theorem}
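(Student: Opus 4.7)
The plan is to mimic the proof of Theorem~\ref{THboxintDec}: for a polyhedron $P$ in each of the five classes, read off the characteristic cone $C$ directly from the available inequality description, verify that $C$ still lies in the class, then set $Q := P \cap B$ for a sufficiently large integer box $B$ (chosen to contain the polytope $\hat{Q}$ supplied by Proposition~\ref{PRdecthmP}(1)), verify that $Q$ is bounded and in the class, and conclude $P = Q + C$ by the argument already used in the proof of Theorem~\ref{THboxintDec} (namely, $\hat{Q} \subseteq P \cap B = Q$ gives $P = \hat{Q} + C \subseteq Q + C \subseteq P + C = P$ by \eqref{charconePCP}). The advantage over the box-integer case is that the inequality descriptions \eqref{ineqLnat} and \eqref{ineqMnat} make both verifications essentially transparent, so no counterparts of Propositions~\ref{PRreconeBIgenerator} and \ref{PRreconeBI} are needed.

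For (1), the characteristic cone obtained from \eqref{ineqLnat} by setting all right-hand sides to $0$ is $C = \{ d \mid 0 \leq d_{i} \ (i \in I),\ d_{j} \leq 0 \ (j \in J),\ d_{j} - d_{i} \leq 0 \ ((i,j) \in E) \}$, still of the form \eqref{ineqLnat}, hence an \Lnat-convex cone. An integer box is itself \Lnat-convex and adjoining its constraints to those of $P$ keeps the form \eqref{ineqLnat}, so $Q = P \cap B$ is \Lnat-convex. For (3), the analogous reading of \eqref{ineqMnat} gives $C = \{ d \mid \mu'(X) \leq d(X) \leq \rho'(X) \}$, where $\mu'(X) := 0$ or $-\infty$ according as $\mu(X) > -\infty$ or not, and symmetrically for $\rho'$; the modified pair remains paramodular, so $C$ is an \Mnat-convex cone. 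The \Mnat-convexity of $Q = P \cap B$ rests on the classical fact that the intersection of an integral generalized polymatroid with an integer box is again an integral generalized polymatroid. The M-convex statement in (3) follows from the \Mnat-convex one by further restricting to a hyperplane $\{ x \mid x(V) = r \}$ in the standard way.

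The remaining parts reduce to (1) and (3). For (2), decompose $P = P_{1} + P_{2}$ with $P_{1}, P_{2}$ \Lnat-convex, apply (1) to each $P_{i}$ to get $P_{i} = Q_{i} + C_{i}$, and read off $P = (Q_{1} + Q_{2}) + (C_{1} + C_{2})$ as the Minkowski sum of a bounded \LLnat-convex polyhedron and an \LLnat-convex cone, by the very definition of \LLnat-convexity. For (4), decompose $P = P_{1} \cap P_{2}$ with $P_{i}$ \Mnat-convex; the characteristic cone is then $C_{1} \cap C_{2}$, an \MMnat-convex cone, and $P \cap B = P_{1} \cap (P_{2} \cap B)$ is the intersection of two \Mnat-convex polyhedra (using the box-preservation from (3)), hence \MMnat-convex. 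For (5), apply (1) to the \Lnat-convex polyhedron $T(P)$ arising from the fixed unimodular linear transformation $T$ that characterizes multimodularity, and pull the decomposition back through $T^{-1}$, which is also unimodular and so preserves integrality. The main (indeed the only nontrivial) ingredient throughout is the box-intersection preservation of \Mnat-convexity; the characteristic-cone computation is immediate in every case, which is why these arguments are markedly shorter than the proof of Theorem~\ref{THboxintDec}.
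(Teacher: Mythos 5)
Your argument is correct and is essentially the paper's: the paper likewise verifies for each class that the characteristic cone, read off from the inequality description via \eqref{charconeAy0}, stays in the class, and then finds a bounded member $Q$ of the class with $\hat{Q} \subseteq Q \subseteq P$ (taking $Q := P \cap B$ whenever box-intersection preserves the class), concluding $P = \hat{Q}+C \subseteq Q+C \subseteq P+C = P$. The points where you diverge are minor and all sound. For (2) you reduce to (1) by writing $P = P_1 + P_2$ and summing the two decompositions; this is precisely the paper's Remark~\ref{RMproofL2R}, offered there as an alternative, whereas the paper's primary proof keeps the unified $Q$-between-$\hat{Q}$-and-$P$ scheme by setting $Q := (P_1 \cap B_1) + (P_2 \cap B_2)$ — necessary because $P \cap B$ can fail to be \LLnat-convex (Remark~\ref{RMinterL2boxR}), a trap your reduction correctly sidesteps. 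For (4) your observation that $P \cap B = P_1 \cap (P_2 \cap B)$ is again an intersection of two \Mnat-convex polyhedra is exactly the justification the paper leaves implicit, and for (5) you route through the unimodular transformation to the \Lnat-convex case instead of using the known inequality description of multimodular polyhedra; both work, and linearity of the transformation ensures it commutes with Minkowski sums and maps cones to cones. The one assertion you leave as a one-liner — that the $0/\pm\infty$ truncation of a paramodular pair $(\mu,\rho)$ is again paramodular — does hold (finiteness of $\rho(X),\rho(Y)$ forces finiteness of $\rho(X\cup Y),\rho(X\cap Y)$ by submodularity, and similarly for the cross inequality), and is at the same level of detail as the paper's own treatment.
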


\begin{proof}
(1) 
The proof of Theorem~\ref{THboxintDec} 
can be adapted to an \Lnat-convex polyhedron 
on the basis of the following properties of an \Lnat-convex polyhedron.
\begin{enumerate}
\item
The characteristic cone of an \Lnat-convex polyhedron is \Lnat-convex.

\item
The intersection of an \Lnat-convex polyhedron 
with an integral box is \Lnat-convex.
\end{enumerate}

We can prove the first statement by making use of the fact 
that an \Lnat-convex polyhedron $P \subseteq \RR\sp{n}$
is described as \eqref{ineqLnat}.
It follows from \eqref{ineqLnat} and  \eqref{charconeAy0} that
the characteristic cone of $P$ is given by
$C = \{ x \mid 
 0 \leq  x_{i} \  (i \in I), 
 x_{j} \leq 0 \ (j \in J), 
 x_{j} - x_{i} \leq 0 \  ((i,j) \in E)
 \}$,
which is also an \Lnat-convex polyhedron.
The second statement also follows from \eqref{ineqLnat}.  
We consider the decomposition
$P = \hat{Q} +  C$
in Proposition~\ref{PRdecthmP},
take a bounded integral box $B$ containing $\hat{Q}$,
and define $Q := P \cap B$,
for which we can show $P = Q + C$
as in the proof of Theorem~\ref{THboxintDec}. 

(2)--(5)
These cases are proved in Section~\ref{SCproofTHlnatmnatDecR}
by using a unified proof scheme consistent with the case of (1).
\end{proof}

\begin{remark} \rm \label{RMnoLpoly}
Theorem~\ref{THlnatmnatDecR}(1) gives a decomposition of an 
\Lnat-convex polyhedron.
However, we cannot obtain a similar statement for an L-convex polyhedron,
simply because there is no bounded L-convex polyhedron.
Note that an L-convex polyhedron $P$ 
has the invariance in the direction of $\vecone = (1,1,\ldots,1)$
in the sense that $x \in P$ implies $x + \lambda \vecone \in P$
for all $\lambda \in \RR$. 
Similarly, there is no bounded \LL-convex polyhedron.
\finbox
\end{remark}

\begin{remark} \rm \label{RMnonintLM}
In each case of Theorem~\ref{THlnatmnatDecR},
the polyhedron $P$ is necessarily an integer polyhedron.
Recall that
we have defined $P$ to be an \Lnat-convex polyhedron
if it is the convex hull of an \Lnat-convex set 
$S$ $(\subseteq \ZZ\sp{n})$.
In the literature of discrete convex analysis,
the notion of \Lnat-convexity is generalized to 
non-integer polyhedra
(Murota--Shioura \cite{MS00poly}).
An \Lnat-convex polyhedron (not necessarily integral)
is described by \eqref{ineqLnat}
with
$l_{i} \in \RR$ $(i \in I)$,
$u_{j} \in \RR$ $(j \in J)$,
and 
$d_{ij} \in \RR$ $((i,j) \in E)$.
For an \Lnat-convex polyhedron $P$ in this generalized sense,
we also obtain the decomposition $P= Q + C$. 
Similar generalizations are possible for 
\Mnat-convex polyhedra, etc., in 
(2)--(5) of Theorem~\ref{THlnatmnatDecR}.
 \finbox
\end{remark}

\subsection{Decomposition of integrally convex sets}
\label{SCresultIC}

Theorem~\ref{THboxintDec} for box-integer polyhedra
can be rephrased for integrally convex sets as follows.

\begin{corollary} \label{COicsetDecR}
The convex hull 
$\overline{S}$ of an integrally convex set
$S$ $(\subseteq \ZZ\sp{n})$
can be represented as 
\begin{equation} \label{SQCicsetR}
 \overline{S} = Q + C 
\end{equation}
with a polytope $Q$ and a polyhedral cone $C$
such that 
$Q \cap \ZZ\sp{n}$ and
$C \cap \ZZ\sp{n}$ are integrally convex.
\end{corollary}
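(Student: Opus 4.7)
The plan is to derive this corollary as an essentially immediate consequence of Theorem~\ref{THboxintDec} together with Proposition~\ref{PRintpolyIC}, which provides the bridge between integrally convex sets and box-integer polyhedra.

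First, starting from the integrally convex set $S \subseteq \ZZ\sp{n}$, I would invoke Proposition~\ref{PRintpolyIC} to conclude that the convex hull $\overline{S}$ is a box-integer polyhedron (and incidentally $S = \overline{S} \cap \ZZ\sp{n}$). This converts the hypothesis from a statement about a discrete set to a statement about a continuous object, putting us in position to apply the decomposition result already proved.

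Next, I would apply Theorem~\ref{THboxintDec} to the box-integer polyhedron $\overline{S}$ to obtain a decomposition
\[
\overline{S} = Q + C
\]
where $Q$ is a bounded box-integer polyhedron and $C$ is a box-integer polyhedral cone. In particular $Q$ is a polytope and $C$ is a polyhedral cone, as required by the statement.

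Finally, to verify the integral convexity conditions, I would apply the converse direction of Proposition~\ref{PRintpolyIC} to each of $Q$ and $C$ separately: since $Q$ is box-integer, $Q \cap \ZZ\sp{n}$ is integrally convex, and since $C$ is box-integer, $C \cap \ZZ\sp{n}$ is integrally convex. This completes the argument. There is no real obstacle here; all the genuine work has already been absorbed into Theorem~\ref{THboxintDec} (and thus ultimately into Propositions~\ref{PRreconeBIgenerator} and~\ref{PRreconeBI}), and the role of this corollary is simply to translate that theorem back into the language of integrally convex sets via Proposition~\ref{PRintpolyIC}.
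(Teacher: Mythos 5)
Your proposal is correct and follows exactly the same route as the paper: Proposition~\ref{PRintpolyIC} to pass from $S$ to the box-integer polyhedron $\overline{S}$, Theorem~\ref{THboxintDec} for the decomposition, and the converse direction of Proposition~\ref{PRintpolyIC} to recover integral convexity of $Q \cap \ZZ\sp{n}$ and $C \cap \ZZ\sp{n}$. Nothing is missing.
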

\begin{proof}
Since $S$ is integrally convex,
$\overline{S}$ is a box-integer polyhedron
by Proposition~\ref{PRintpolyIC}.
By Theorem~\ref{THboxintDec} applied to $\overline{S}$
we obtain the decomposition \eqref{SQCicsetR},
where 
$Q$ is a bounded box-integer polyhedron
and 
$C$ is a box-integer cone.
Then $Q \cap \ZZ\sp{n}$ and $C \cap \ZZ\sp{n}$ are integrally convex
by Proposition~\ref{PRintpolyIC}.
\end{proof}

While the decomposition 
$\overline{S} = Q + C$
in \eqref{SQCicsetR}
is defined via embedding of $S$
into $\RR\sp{n}$,
our second main result
(Theorem~\ref{THicsetDecZ} below)
establishes a decomposition 
of an integrally convex set $S$
directly within $\ZZ\sp{n}$.
We emphasize the difference between
$\overline{S} = Q + C$ and
\begin{equation} \label{SQCicsetZ0}
S = (Q\cap \ZZ\sp{n}) + (C \cap \ZZ\sp{n}) .
\end{equation}
We can show 
``\eqref{SQCicsetZ0} $\Rightarrow$ \eqref{SQCicsetR}'' as
\[
\overline{S} 
= \overline{(Q\cap \ZZ\sp{n}) + (C \cap \ZZ\sp{n})}
= \overline{Q\cap \ZZ\sp{n}} +  \overline{C \cap \ZZ\sp{n}}
= Q + C ,
\]
where $\overline{ S_{1} + S_{2}  } = \overline{ S_{1} } + \overline{ S_{2} }$
in \eqref{minkowZminkowR1} is used.
However, the converse 
``\eqref{SQCicsetZ0} $\Leftarrow$ \eqref{SQCicsetR}''
is not always true (see Example \ref{EXdecRZcomp} below).
Thus, \eqref{SQCicsetZ0} is
(strictly) stronger than \eqref{SQCicsetR}.

To state the theorem we need to introduce a terminology.
We call a set  
$G$ $(\subseteq \ZZ\sp{n})$ 
a {\em conic set} 
if its convex hull $\overline{G}$ is a cone.
An integrally convex set $G$ is conic if and only if
$G = C \cap \ZZ\sp{n}$ for some box-integer cone $C$.

\begin{theorem} \label{THicsetDecZ}
Every integrally convex set $S$ 
$(\subseteq \ZZ\sp{n})$
can be represented as 
\begin{equation} \label{SQCicsetZ}
 S = T + G 
\end{equation}
with a bounded integrally convex set $T$ and a conic integrally convex set $G$.
\end{theorem}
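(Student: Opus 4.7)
The plan is to leverage the continuous decomposition $\overline{S} = \hat{Q} + C$ supplied by Corollary~\ref{COicsetDecR}, to take the lattice version of the cone directly as $G := C \cap \ZZ\sp{n}$, and to enlarge the polytope part to a bounded region large enough to absorb the fractional remainders that arise when writing each $s \in S$ as $t + g$ with $g \in G$. The essential technical leverage is Proposition~\ref{PRreconeBIgenerator}: because $C$ is generated by $\{-1, 0, +1\}$-vectors, the fractional tail in any conic combination of those generators lies in a uniformly bounded region, which keeps the would-be $t$ both inside $\overline{S}$ and inside a fixed integral box.

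First I would apply Corollary~\ref{COicsetDecR} to obtain $\overline{S} = \hat{Q} + C$ with $\hat{Q}$ a bounded box-integer polytope and $C$ a box-integer cone, and set $G := C \cap \ZZ\sp{n}$. By Proposition~\ref{PRintpolyIC}, $G$ is integrally convex and $\overline{G} = C$, so $G$ is conic. Next, fix $\{-1, 0, +1\}$-generators $d_{1}, \ldots, d_{k}$ of $C$ via Proposition~\ref{PRreconeBIgenerator} and define the bounded set
\[
P' := \hat{Q} + \left\{ \sum_{i=1}\sp{k} \mu_{i} d_{i} \ : \ 0 \leq \mu_{i} \leq 1 \ \ (i=1, \ldots, k) \right\} .
\]
Choose any integral box $B \supseteq P'$ and set $T := \overline{S} \cap B \cap \ZZ\sp{n}$. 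I would verify that $T$ is integrally convex by showing that $\overline{S} \cap B$ is box-integer---for any integral box $B'$, the intersection $(\overline{S} \cap B) \cap B' = \overline{S} \cap (B \cap B')$ is an integer polyhedron, since $B \cap B'$ is an integral box and $\overline{S}$ is box-integer---and then invoking Proposition~\ref{PRintpolyIC}. Clearly $T$ is bounded.

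The main work is to verify $S = T + G$. The inclusion $T + G \subseteq S$ is immediate from $T + G \subseteq \overline{S} + C = \overline{S}$ via \eqref{charconePCP}, combined with $T + G \subseteq \ZZ\sp{n}$ and the hole-freeness of $S$. For the reverse inclusion, given $s \in S$ I would pick a decomposition $s = q + c$ with $q \in \hat{Q}$ and $c \in C$, expand $c = \sum_{i=1}\sp{k} \lambda_{i} d_{i}$ with $\lambda_{i} \geq 0$, and separate integer and fractional parts by setting $g := \sum_{i=1}\sp{k} \lfloor \lambda_{i} \rfloor d_{i}$ and $t := s - g = q + \sum_{i=1}\sp{k} (\lambda_{i} - \lfloor \lambda_{i} \rfloor) d_{i}$. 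Then $g \in C \cap \ZZ\sp{n} = G$, while $t \in P' \subseteq B$, $t \in \hat{Q} + C = \overline{S}$, and $t \in \ZZ\sp{n}$, so $t \in T$ and $s = t + g \in T + G$.

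The hard part is reconciling the two simultaneous requirements on $t = s - g$: it must lie in $\overline{S}$ (to force $t \in S$ via hole-freeness) and also inside a \emph{fixed} bounded box (to force $t \in T$). Both are achievable at once only because Proposition~\ref{PRreconeBIgenerator} confines the fractional remainder $\sum_{i=1}\sp{k} (\lambda_{i} - \lfloor \lambda_{i} \rfloor) d_{i}$ to a universal bounded subset of $C$, independent of $s$; this is precisely what lets the discrete decomposition avoid the Minkowski-sum pathologies illustrated in Examples~\ref{EXicdim2sumhole} and \ref{EXicdim3sum}.
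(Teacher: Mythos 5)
Your proposal is correct and follows essentially the same route as the paper's own proof: decompose $\overline{S}=\hat{Q}+C$, take $G:=C\cap\ZZ\sp{n}$, round down the coefficients of the $\{-1,0,+1\}$-generators to split each $s\in S$ into an integer conic part and a remainder confined to a fixed integral box, and define $T$ as the integer points of $\overline{S}$ in that box. The only cosmetic difference is that you enter via Corollary~\ref{COicsetDecR} rather than directly via Propositions~\ref{PRreconeICgenerator} and \ref{PRreconeIC}, which changes nothing of substance.
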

\begin{proof}
The proof, to be given in Section~\ref{SCproofTHicsetDecZ},
is based on propositions equivalent to 
Propositions \ref{PRreconeBIgenerator} and \ref{PRreconeBI}.
\end{proof}

\begin{example} \rm \label{EXdecRZcomp}
We compare the decompositions 
in Corollary~\ref{COicsetDecR} and Theorem~\ref{THicsetDecZ}
for a simple two-dimensional example.
Let $S$ be an infinite subset of $\ZZ\sp{2}$
depicted at the top left of Figure~\ref{FGminkowIConZ},
which can be described, e.g., as
$S = \{ x \in \ZZ\sp{2} \mid 
  x_{1} + x_{2} \geq 1, \  | x_{1} - x_{2} | \leq 1   \}$.
This set $S$ is integrally convex,
and the convex hull $\overline{S}$ is a box-integer polyhedron 
described as
$\overline{S} = \{ x \in \RR\sp{2} \mid 
  x_{1} + x_{2} \geq 1, \  | x_{1} - x_{2} | \leq 1   \}$.
Let $Q$ be the line segment connecting $(1,0)$ and $(0,1)$
and $C$ be the semi-infinite line starting at $(0,0)$ 
and emanating in the direction of $(1,1)$.
Both $Q$ and $C$ are box-integer, and 
we obtain the decomposition $\overline{S} = Q + C$ 
in Corollary~\ref{COicsetDecR}.
The semi-infinite line $C$ is, in fact, the characteristic cone of $\overline{S}$.
Both $Q\cap \ZZ\sp{2}$ and $C \cap \ZZ\sp{2}$
are integrally convex, but the identity
$S = (Q\cap \ZZ\sp{2}) + (C \cap \ZZ\sp{2})$
in \eqref{SQCicsetZ0} fails, 
because of the `holes' in $(Q\cap \ZZ\sp{2}) + (C \cap \ZZ\sp{2})$
at $x = (t,t)$ for integers $t \geq 1$.
With the choice of
$T = (Q\cap \ZZ\sp{2}) \cup \{ (1,1) \} = \{ (1,0), (0,1), (1,1) \}$
and
$G = C \cap \ZZ\sp{2} = \{ (t,t) \mid t\geq 0, t \in \ZZ \}$,
we obtain the decomposition 
$S = T + G$ in Theorem~\ref{THicsetDecZ}.
Here both $T$ and $G$ are integrally convex.
\finbox
\end{example}

\begin{figure}
\centering
\includegraphics[width=0.75\textwidth,clip]{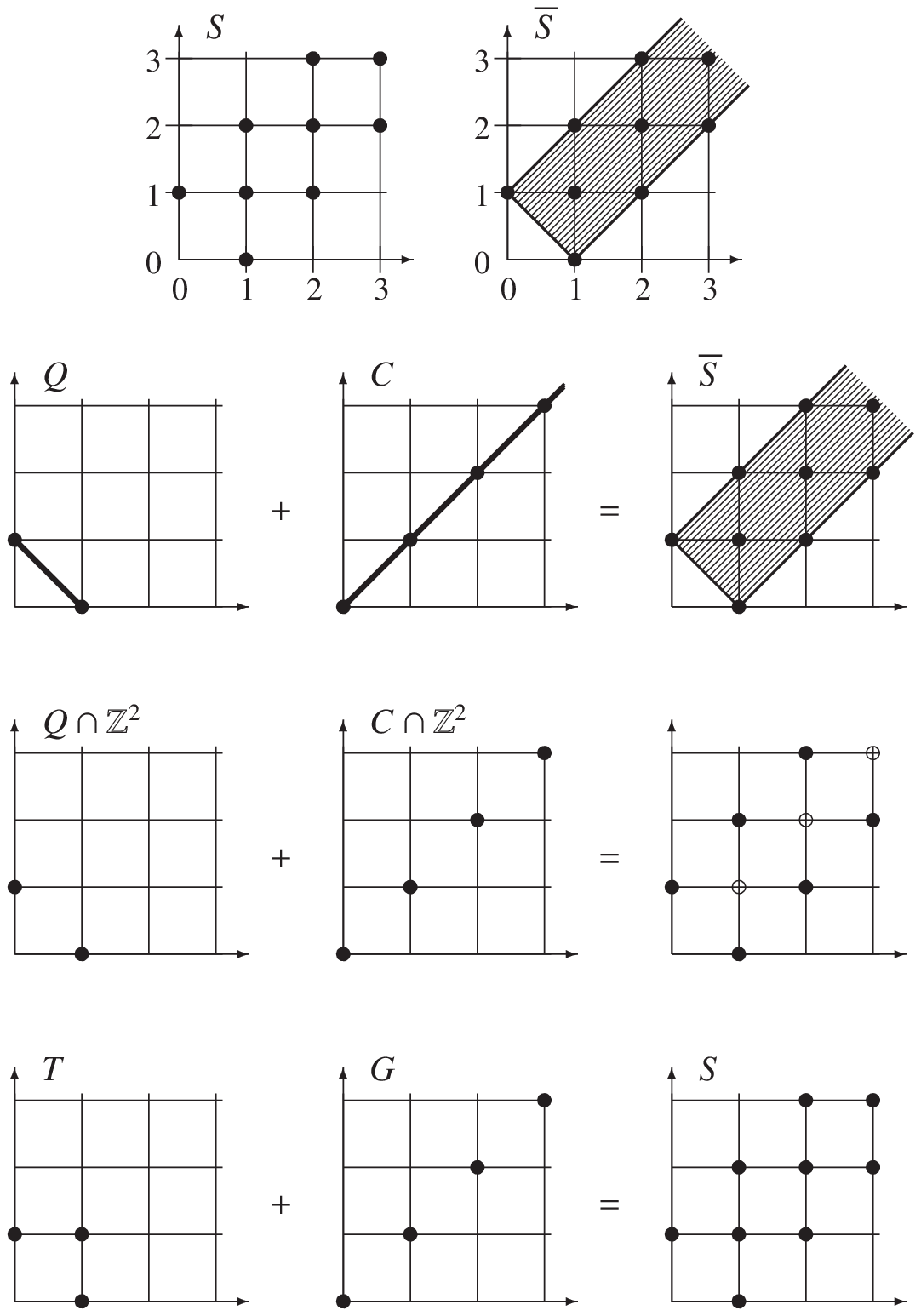}
\caption{$Q + C = \overline{S}$,
$(Q\cap \ZZ\sp{2}) + (C \cap \ZZ\sp{2}) \ne S$, and $T + G = S$}
\label{FGminkowIConZ}
\end{figure}

Theorem~\ref{THicsetDecZ} can be adapted to 
some classes of discrete convex sets in discrete convex analysis,
such as \Lnat-convex and \Mnat-convex sets
(see Murota \cite{Mdcasiam} for definitions of these concepts). 
The corresponding statements for these subclasses
are given in 
Theorem~\ref{THlnatmnatDecZ} below.
It is emphasized that Theorem~\ref{THlnatmnatDecZ}
does not follow from Theorem~\ref{THicsetDecZ}
(for general integrally convex sets)
nor from Theorem~\ref{THlnatmnatDecR} (for \Lnat-convex polyhedra, etc.).
Note that we have
$S, T, G \subseteq \ZZ\sp{n}$ in Theorem~\ref{THlnatmnatDecZ},
whereas 
$P, Q, C \subseteq \RR\sp{n}$
in Theorem~\ref{THlnatmnatDecR}.

\begin{theorem} \label{THlnatmnatDecZ}
\quad

\noindent
{\rm (1)} 
 Every \Lnat-convex set $S$ 
can be represented as 
$S= T + G$ 
with a bounded \Lnat-convex set $T$ and a conic \Lnat-convex set $G$.

\noindent
{\rm (2)} 
 Every \LLnat-convex set $S$ can be represented as 
$S= T + G$ 
with a bounded \LLnat-convex set $T$ and a conic \LLnat-convex set $G$.

\noindent
{\rm (3)} 
 Every \Mnat-convex set $S$ can be represented as 
$S= T + G$ 
with a bounded \Mnat-convex set $T$ and a conic \Mnat-convex set $G$.
Similarly for an M-convex set $S$,
with $T$ and $G$ being M-convex.

\noindent
{\rm (4)} 
 Every \MMnat-convex set $S$ can be represented as 
$S= T + G$ 
with a bounded \MMnat-convex set $T$ and a conic \MMnat-convex set $G$.
Similarly for an \MM-convex set $S$, with $T$ and $G$ being \MM-convex.

\noindent
{\rm (5)} 
 Every multimodular set $S$ can be represented as 
$S= T + G$ 
with a bounded multimodular set $T$ and a conic multimodular set $G$.
\end{theorem}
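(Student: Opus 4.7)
The plan is to follow the blueprint of Theorem~\ref{THlnatmnatDecR} at the integer level, exploiting for each class the explicit inequality description available for it. For each of the five classes $\mathcal{C}$ listed in the theorem, I would start by setting $P:=\overline{S}$, which is a polyhedron of class $\mathcal{C}$ in the sense of Theorem~\ref{THlnatmnatDecR} (because the classes of discrete sets in question are defined precisely as the integer points of the corresponding polyhedra). Applying Theorem~\ref{THlnatmnatDecR} to $P$ then produces a decomposition $P=\hat Q+C$ with $\hat Q$ a bounded polyhedron of class $\mathcal{C}$ and $C$ a cone of class $\mathcal{C}$.

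I would next put
\[
G:=C\cap\ZZ^{n},\qquad T:=S\cap B,
\]
where $B$ is any bounded integral box containing $\hat Q$. That $G$ is a conic set of class $\mathcal{C}$ follows from the integer inequality description of $C$ (e.g.\ \eqref{ineqLnat} for \Lnat-cones, \eqref{ineqMnat} for \Mnat-cones), and that $T$ is a bounded set of class $\mathcal{C}$ is an instance of the closure of $\mathcal{C}$ under intersection with an integral box---the same fact invoked in the proof of Theorem~\ref{THlnatmnatDecR}(1).

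It then remains to verify $S=T+G$. The inclusion $T+G\subseteq S$ is routine: for $t\in T\subseteq\overline{S}$ and $g\in G\subseteq C={\rm char.cone}\,\overline{S}$, \eqref{charconePCP} gives $t+g\in\overline{S}$, and then integrality plus hole-freeness $S=\overline{S}\cap\ZZ^{n}$ (a consequence of integral convexity, which holds in each class $\mathcal{C}$) gives $t+g\in S$. For the reverse $S\subseteq T+G$, my plan is to compare convex hulls:
\[
\overline{T+G}=\overline{T}+\overline{G}\ \supseteq\ \hat Q+C=P=\overline{S},
\]
where the equality is \eqref{minkowZminkowR1}, the inclusion $\hat Q\subseteq\overline{T}$ follows from $\hat Q\cap\ZZ^{n}\subseteq S\cap B=T$ together with the box-integrality identity $\hat Q=\overline{\hat Q\cap\ZZ^{n}}$, and $C\subseteq\overline{G}$ follows because a cone of class $\mathcal{C}$ is generated by its integer points (by the $\{-1,0,+1\}$-generation in the \Lnat- and \Mnat-cases, and inherited in the others). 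Combined with $\overline{T+G}\subseteq\overline{S}$ from the first inclusion above, this yields $\overline{T+G}=\overline{S}$; hence if $T+G$ is hole-free, then $T+G=\overline{T+G}\cap\ZZ^{n}=\overline{S}\cap\ZZ^{n}=S$, as required.

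The main obstacle is therefore the hole-freeness of $T+G$, which is where the class-by-class analysis comes in. For \Mnat-convex $S$ it is automatic, since the Minkowski sum of two \Mnat-convex sets is \Mnat-convex, hence integrally convex. For \Lnat-convex $S$ it follows from \cite[Theorem 8.42]{Mdcasiam}, which asserts that the Minkowski sum of two \Lnat-convex sets is integrally convex. For \LLnat-convex $S$ I would avoid attacking hole-freeness of $T+G$ directly and instead exploit the defining representation \LLnat-convex $=$ \Lnat\,$+$\,\Lnat: write $S=S_{1}+S_{2}$, apply the \Lnat-decomposition to each factor, and recombine bounded-with-bounded and conic-with-conic. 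The multimodular case reduces to \Lnat via the standard unimodular change of variables. The \MMnat-case appears the most delicate, since \MMnat $=$ \Mnat\,$\cap\,$\Mnat does not interact cleanly with Minkowski sums; there I would work at the polyhedral level with the intersection representation of $\overline{S}$, note that $T$ and $G$ remain \MMnat, and establish hole-freeness of $T+G$ directly from the pair of g-polymatroid inequality systems describing $\overline{S}$.
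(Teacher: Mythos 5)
Your overall route is genuinely different from the paper's. The paper does not derive Theorem~\ref{THlnatmnatDecZ} from the polyhedral decomposition of Theorem~\ref{THlnatmnatDecR}; it starts from Theorem~\ref{THicsetDecZ}, which already supplies an integer-level decomposition $S=\hat T+G$ with $\hat T$ a bounded integrally convex set and $G=C\cap\ZZ\sp{n}$, and then enlarges $\hat T$ to an A-convex set $T$ with $\hat T\subseteq T\subseteq S$ and sandwiches: $S=\hat T+G\subseteq T+G\subseteq S+G\subseteq S$, where the last inclusion uses only hole-freeness of $S$ itself. The point of this arrangement is that one never has to prove that $T+G$ is hole-free: the inclusion $S\subseteq T+G$ is inherited from Theorem~\ref{THicsetDecZ}, whose proof is where the hole problem is actually solved (via the box enlarged by the margin $L$ and the rounding $\sum_j\lambda_j d\sp{j}\mapsto\sum_j\lfloor\lambda_j\rfloor d\sp{j}$). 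Your proposal instead obtains $S\subseteq T+G$ from $\overline{T+G}=\overline S$ together with hole-freeness of $T+G$, which forces you to justify, class by class, exactly the property that the paper's scheme is designed to bypass. Where that justification is available your argument is clean and arguably shorter than the paper's: parts (1) and (3) are covered by the cited sum theorems (\Lnat\ $+$ \Lnat\ is integrally convex; \Mnat\ $+$ \Mnat\ is \Mnat), and your treatments of (2) and (5) by recombination and by unimodular change of variables are sound (the former is essentially the paper's Remark~\ref{RMproofL2Z}).

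The genuine gap is part (4). You only announce that you ``would establish hole-freeness of $T+G$ directly from the pair of g-polymatroid inequality systems''; no argument is given, and none can simply be quoted: there is no theorem asserting that the Minkowski sum of two \MMnat-convex sets is hole-free, and the intersection representation $\overline S=P_{1}\cap P_{2}$ gives no handle on Minkowski sums (intersection and vector sum do not commute). So as written, (4) is unproven, and with it the \MM-convex variant. The fix is either to supply that missing hole-freeness argument for your specific pair $(T,G)$, or to adopt the paper's scheme, in which the only facts needed about the \MMnat\ class are that the characteristic cone of an \MMnat-convex polyhedron is \MMnat-convex and that $S\cap B$ is \MMnat-convex for an integral box $B$ --- both immediate from the inequality description, with no Minkowski-sum hole-freeness required.
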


\begin{proof}
The proof is given in Section~\ref{SCproofTHlnatmnatDecZ}.
\end{proof}

\section{Proofs}
\label{SCproof}

The structure of the proofs
(dependence among propositions and theorems) is shown in the diagram 
in Figure~\ref{FGprfstruct}.

\begin{figure}
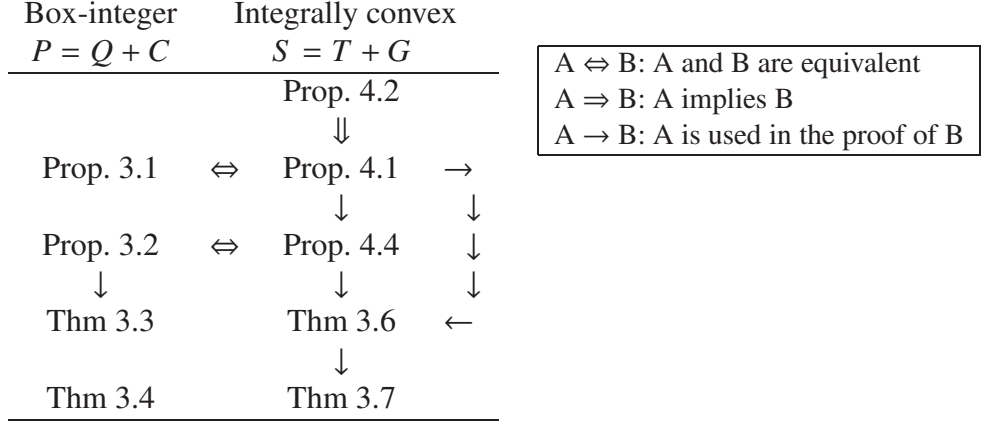

\centering
\begin{tabular}{cccr}
Box-integer &   \multicolumn{3}{c}{Integrally convex}  
\\
 $P = Q + C$  & &  $S = T + G$ 
\\ \hline
 & & Prop.~\ref{PRconeICgenerator} & 
\\
 &  & $\Downarrow$ & 
\\
Prop.~\ref{PRreconeBIgenerator} & $\Leftrightarrow$ & Prop.~\ref{PRreconeICgenerator} 
& $\rightarrow$ \
\\
 &  & $\downarrow$ & $\downarrow$ 
\\
 Prop.~\ref{PRreconeBI} & $\Leftrightarrow$ & Prop.~\ref{PRreconeIC} & $\downarrow$
\\
   $\downarrow$ &  &  $\downarrow$  & $\downarrow$ 
\\
Thm \ref{THboxintDec} &  &  Thm \ref{THicsetDecZ}  & $\leftarrow$ \  
\\
    &  &  $\downarrow$  & 
\\
 Thm \ref{THlnatmnatDecR}   &  & Thm \ref{THlnatmnatDecZ}  &  
\\ \hline
\end{tabular}
\quad
{\small 
\begin{tabular}{l}
\hline
\multicolumn{1}{|l|}{A $\Leftrightarrow$ B: A and B are equivalent} 
\\
\multicolumn{1}{|l|}{A $\Rightarrow$ B: A implies B} 
\\
\multicolumn{1}{|l|}{A $\rightarrow$ B: A is used in the proof of B} 
\\ \hline
\\ \multicolumn{1}{c}{}
\\ \multicolumn{1}{c}{}
\\ \multicolumn{1}{c}{}
\\ \multicolumn{1}{c}{}
\\ \multicolumn{1}{c}{}
\end{tabular}
}
\caption{Dependence among propositions and theorems}
\label{FGprfstruct}
\end{figure}

\subsection{Proof of Proposition~\ref{PRreconeBIgenerator}}
\label{SCproofrecone}

In this section we prove Proposition~\ref{PRreconeBIgenerator},
stating that the characteristic cone of a box-integer polyhedron 
is generated by $\{ -1,0,  \allowbreak   +1 \}$-vectors.
By Proposition \ref{PRintpolyIC},
this statement can be rephrased (equivalently)
in terms of integral convexity as follows.

\begin{proposition} \label{PRreconeICgenerator} 
Let $S$ $(\subseteq \ZZ\sp{n})$ be an integrally convex set.
The characteristic cone $C$
of its convex hull $\overline{S}$
is generated by vectors in $\{ -1,0,+1 \}\sp{n}$.
In particular, $C$ is an integer polyhedron.
\end{proposition}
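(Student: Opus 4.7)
The plan is to prove that every integer vector $d$ in the characteristic cone $C$ can be written as a nonnegative combination of $\{-1, 0, +1\}$-vectors belonging to $C$. Since $\overline{S}$ is a rational polyhedron (being box-integer), its recession cone $C$ is rational, so it suffices to cover $d \in C \cap \ZZ\sp{n}$, and I would proceed by induction on $\|d\|_{1}$ with the base case $d = 0$ being trivial.

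For the inductive step, fix any reference point $x_{0} \in S$ (possible since $S$ is nonempty) and set $m := \|d\|_{\infty} \geq 1$. Consider
\begin{equation*}
x := x_{0} + \frac{1}{m}\,d ,
\end{equation*}
which lies in $\overline{S}$ by \eqref{repLMtam1=1005}. The inequality $m \geq |d_{i}|$ together with \eqref{Nxdef} shows
\begin{equation*}
N(x) - x_{0} \;\subseteq\; \prod_{i=1}\sp{n} \bigl\{0,\, \mathrm{sign}(d_{i})\bigr\} \;\subseteq\; \{-1,0,+1\}\sp{n} ,
\end{equation*}
i.e., every lattice point near $x$ differs from $x_{0}$ by a $\{-1, 0, +1\}$-vector whose sign pattern is consistent with that of $d$. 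Integral convexity of $S$ then supplies a convex combination $x = \sum_{j} \lambda_{j} z\sp{(j)}$ with $z\sp{(j)} \in S \cap N(x)$, and multiplying by $m$ yields
\begin{equation*}
d = \sum_{j} (m\lambda_{j})\bigl(z\sp{(j)} - x_{0}\bigr) ,
\end{equation*}
a nonnegative decomposition of $d$ into $\{-1, 0, +1\}$-vectors with signs consistent with those of $d$.

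The main obstacle is to guarantee that each vector $v\sp{(j)} := z\sp{(j)} - x_{0}$ actually belongs to $C$, not merely that $z\sp{(j)} \in S$. To handle this, the plan is to exploit the translation invariance of the construction along the ray: for every integer $k \geq 0$, the point $x_{0} + k d$ again lies in $S$ (since $d \in C \cap \ZZ\sp{n}$ and integrally convex sets are hole-free), and repeating the local analysis at $x_{0} + k d + d/m$ produces translates of the same candidate vectors $v\sp{(j)}$; by arguing that a candidate direction which remains viable along this entire family must satisfy $y + v\sp{(j)} \in \overline{S}$ for every $y \in \overline{S}$ — equivalently, $v\sp{(j)} \in C$ — one closes the gap. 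Once every $v\sp{(j)}$ is known to lie in $C$, the combination $d = \sum_{j} (m\lambda_{j})\, v\sp{(j)}$ places $d$ inside the conic hull of $C \cap \{-1, 0, +1\}\sp{n}$, completing the induction, and the final assertion that $C$ is an integer polyhedron is then immediate, since $C$ is generated as a cone by finitely many $\{-1, 0, +1\}$-vectors.
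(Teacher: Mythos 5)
Your first half is sound and matches the paper's setup: normalizing so that the relevant lattice points lie in $x_{0}+\{-1,0,+1\}\sp{n}$ with signs consistent with $d$, and invoking integral convexity to write $x_{0}+d/m$ as a convex combination of points of $S\cap N(x_{0}+d/m)$, is exactly how the paper begins. But you have correctly identified the crux --- that each $v\sp{(j)}=z\sp{(j)}-x_{0}$ must lie in $C$, not merely satisfy $x_{0}+v\sp{(j)}\in S$ --- and your proposed way of closing that gap does not work as described. Two problems. First, integral convexity supplies \emph{some} convex decomposition of $x_{0}+kd+d/m$ for each $k$, but these decompositions can use different lattice points for different $k$, so there is no reason a single ``candidate direction remains viable along this entire family.'' Second, and more fundamentally, even if some $v$ did appear for every $k$, the conclusion you would get is $x_{0}+kd+v\in S$ for all $k\geq 0$; this is a ray in the direction $d$ translated by $v$, not a ray in the direction $v$, and by \eqref{charconeLambda} membership $v\in C$ requires the latter. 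The scenario you must rule out is precisely a direction $v\in N(d)$ with $x_{0}+v\in S$ but $x_{0}+2v\notin S$ (see Figure~\ref{FGinfdir}), and nothing in your sketch excludes it.

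The paper's proof of this step (Proposition~\ref{PRconeICgenerator}) is where all the real work lies. It sets $R:=N(d)\cap{\rm char.cone}\,\overline{S}$, assumes for contradiction that $d\notin\overline{R}$, separates $d$ from $\overline{R}$ by a hyperplane to produce an ordered set $B=\{d\sp{1},\ldots,d\sp{l}\}$ with the nesting property \eqref{CL2cond3} (Lemma~\ref{LMtam2=1005}), and then repeatedly advances the base point to $x\sp{1}=x\sp{0}+k\sp{*}d\sp{j\sp{*}+1}$, the farthest point of $S$ along a direction of $B$ that is not in $R$, proving via midpoint arguments that the set of directions failing at the first step strictly grows until a contradiction is reached. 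Some mechanism of this kind --- distinguishing directions that survive only finitely many steps from those in the characteristic cone, and eliminating the former --- is unavoidable, and your proposal does not contain one. (As a secondary point, your induction on $\|d\|_{1}$ has no identified decreasing quantity: if all $v\sp{(j)}$ lie in $C$ the argument is one-shot, and if some do not, the induction hypothesis does not apply to them.)
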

\begin{proof}
Take any $d \in C$ with $\| d \|_{\infty}=1$.
Proposition~\ref{PRconeICgenerator} below
shows that 
there exist $d\sp{1}, d\sp{2}, \ldots , d\sp{h} \in N(d)$
such that 
$d \in \overline{ \{ d\sp{1}, d\sp{2}, \ldots , d\sp{h} \} }$,
where $N(d)$ denotes the integral neighborhood of $d$ defined in \eqref{Nxdef}.
We have
$N(d) \subseteq \{ -1,0,+1 \}\sp{n}$
since $\| d \|_{\infty}=1$.
\end{proof}

\begin{proposition} \label{PRconeICgenerator} 
Let $S$ $(\subseteq \ZZ\sp{n})$ be an integrally convex set,
$x\sp{0} \in S$, and $d \in \RR\sp{n}$ with $\| d \|_{\infty}=1$.
If
\begin{equation} \label{x0lamdbarD}
 x\sp{0} + \lambda d \in \overline{S} 
\quad \mbox{\rm for all $\lambda \geq 0$},
\end{equation}
there exist
$d\sp{1}, d\sp{2}, \ldots , d\sp{h} \in N(d) $ 
such that 
$d \in \overline{ \{ d\sp{1}, d\sp{2}, \ldots , d\sp{h} \} }$ 
and
\begin{equation} \label{x0lamdjD}
 x^0 + k d\sp{j}  \in S 
\qquad (j=1,2,\ldots , h; k=1,2, \ldots ).
\end{equation}
\vspace{-2\baselineskip}\\
\finboxARX
\end{proposition}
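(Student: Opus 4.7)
The plan is to recast the desired conclusion in terms of the characteristic cone $C = \mathrm{char.cone}\,\overline{S}$. By~\eqref{repLMtam1=1005} together with hole-freeness of integrally convex sets (Proposition~\ref{PRintpolyIC} and~\eqref{icsetholefree}), an integer vector $e$ satisfies $x^0 + k e \in S$ for every positive integer $k$ exactly when $e \in C$. So the proposition reduces to exhibiting $d^1,\ldots,d^h \in N(d) \cap C$ with $d \in \overline{\{d^1,\ldots,d^h\}}$, i.e.\ to showing $d \in \overline{N(d) \cap C}$.

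A first step applies integral convexity of $S$ at $x^0 + d \in \overline{S}$. Because $x^0 \in \ZZ^n$, the identity $N(x^0+d) = x^0 + N(d)$ holds, so integral convexity immediately yields a convex decomposition $d = \sum_{e \in N(d)} \alpha_e e$ with $\alpha_e \geq 0$, $\sum_e \alpha_e = 1$, and $x^0 + e \in S$ for every $e$ in the support. This settles the $k=1$ instance of~\eqref{x0lamdjD} and expresses $d$ as a convex combination of $N(d)$-vectors, but it does not yet certify the full ray condition $e \in C$.

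To upgrade the single-step guarantee to the ray condition, I would iterate along the ray: for each positive integer $k$, integral convexity at $x^0 + kd \in \overline{S}$ gives $kd = \sum_j \alpha_j^k z_j^k$ with $z_j^k \in N(kd)$ and $x^0 + z_j^k \in S$. Rescaling produces convex decompositions $d = \sum_j \alpha_j^k (z_j^k/k)$ whose summands lie within sup-distance $1/k$ of $d$. A pigeon-hole argument on the finite set $N(d)$, combined with the polyhedrality of $\overline{S}$, should then extract a finite list $d^1,\ldots,d^h \in N(d)$ of persistent directions whose convex hull contains $d$.

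The hard part is to verify that each extracted direction $d^j$ actually lies in $C$, rather than merely satisfying the single-step property $x^0 + d^j \in S$. Since Proposition~\ref{PRreconeIC} (box-integrality of $C$) is proved only later and would be circular to invoke here, my plan for the final step is a separating-hyperplane contradiction: supposing $d \notin \overline{N(d) \cap C}$, take a functional $a$ with $\langle a, d \rangle > \max_{e \in N(d) \cap C} \langle a, e \rangle$; the persistent participation of some $e \in N(d) \setminus C$ in the decompositions of $kd$ at arbitrarily large $k$, coupled with integral convexity applied at a point on the ray where $x^0 + \lambda e$ first exits $\overline{S}$, should force a contradiction with the existence of such an $a$.
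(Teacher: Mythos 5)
Your reformulation is sound and matches the paper's: by hole-freeness of $S$, the condition $x^0+kd^j\in S$ for all integers $k\geq 1$ is equivalent to $d^j\in C:=\mathrm{char.cone}\,\overline{S}$, so the goal is indeed $d\in\overline{N(d)\cap C}$ (the paper sets $R:=N(d)\cap C$ and aims at $d\in\overline{R}$). Your first step --- integral convexity at $x^0+d$ yields $d\in\overline{\{e\in N(d)\mid x^0+e\in S\}}$ --- is also the paper's starting point. The gap is everything after that. The middle step does not connect to the conclusion: the vectors $z_j^k$ lie in $N(kd)$, a neighborhood that changes with $k$, so there is no pigeonhole ``on the finite set $N(d)$'' to perform, and the rescaled vectors $z_j^k/k$ merely converge to $d$; no mechanism is given by which this iteration certifies that any particular element of $N(d)$ belongs to $C$. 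Your final step names two correct ingredients (a hyperplane separating $d$ from $\overline{N(d)\cap C}$, and walking along a bad direction $e$ to the last lattice point of $S$ on the ray $x^0+\lambda e$), but ``should force a contradiction'' is exactly where the work lies, and as sketched the argument does not close.

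The missing idea is a monotone elimination scheme. Integral convexity at $x^0+d$ gives $d\in\overline{X\setminus B^{*}}$, where $X=N(d)$ and $B^{*}$ is the set of directions $e\in X$ with $x^0+e\notin S$; a separating hyperplane produces a set $B\supseteq B^{*}$ disjoint from $N(d)\cap C$ with $d\notin\overline{X\setminus B}$, and one must grow $B^{*}$ until it equals $B$. The paper does this by replacing $x^0$ with $x^1:=x^0+k^{*}d^{\,j^{*}+1}$, where $d^{\,j^{*}+1}\in B\setminus B^{*}$ exits $S$ after $k^{*}$ steps, and then proving that every direction already in $B^{*}$ remains blocked at the new base point --- otherwise the process could stall or cycle. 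That step (Claim~\ref{CLxprime}) is the technical heart: it needs the ordering of $B$ along the separating functional recorded in \eqref{CL2cond3}, together with an integral-convexity argument at the midpoints $x^0+(d^{\,j^{*}+1}+d^{\,i})/2$, to conclude $x^1+d^{\,i}\notin S$ for all previously blocked $d^{\,i}$. Nothing in your sketch supplies this monotonicity, nor the termination argument that $B^{*}$ reaches $B$ and hence contradicts $d\notin\overline{X\setminus B}$. Without these, the separating-hyperplane contradiction you propose cannot be completed.
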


\begin{figure}
\centering
\includegraphics[width=0.6\textwidth,clip]{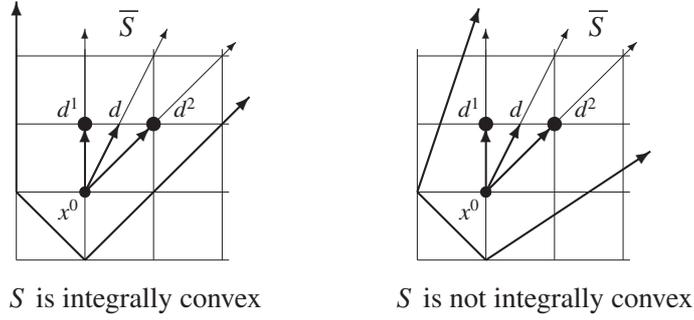}
\caption{Necessity of integral convexity in Proposition~\ref{PRconeICgenerator}}
\label{FGinfdir}
\end{figure}

The condition \eqref{x0lamdbarD} is equivalent to saying that
$d$ belongs to the characteristic cone of $\overline{S}$.
The condition \eqref{x0lamdjD} implies that 
each $d\sp{j}$ belongs to the characteristic cone of $\overline{S}$,
but the converse is not true because
\eqref{x0lamdjD} imposes an additional requirement of integrality.
The role of integral convexity of $S$ is illustrated in Figure \ref{FGinfdir}.
In the left panel, the set $S$ is integrally convex,
while $S$ is not integrally convex in the right,
where $d\sp{1}$ does not meet the condition in \eqref{x0lamdjD}.

To prove Proposition~\ref{PRconeICgenerator},
we need the following general lemma concerning a set 
of $\{ 0,1 \}$-vectors.
Figure~\ref{FGrwbpoint} illustrates this lemma when $X = \{ 0,1 \}\sp{2}$,
where we think of $R$ and $B$ as sets of `red' and `black' points, respectively,
which are disjoint by \eqref{CL2cond0}.

\begin{lemma}\label{LMtam2=1005}
Let $X=\{ 0, 1 \}\sp{m}$.
For any $R \subseteq X$
and $d \in \overline{X} \setminus \overline{R}$,
 there exists some $B \subseteq X$ 
that satisfies the following conditions:
\begin{align} 
& 
R \subseteq X \setminus B  \qquad (\mbox{\rm i.e.,} \ \ R \cap B = \emptyset) ,
\label{CL2cond0}
\\ 
& d \notin \overline{X \setminus B},
\label{CL2cond1}
\\ & \overline{B} \cap (\overline{X \setminus B}) = \emptyset .
\label{CL2cond2}
\end{align} 
Moreover, the elements of $B$ can be ordered 
as $B = \{ d\sp{1}, d\sp{2}, \ldots, d\sp{l} \}$
(where $l=|B|$)
so as to satisfy
\begin{align} 
\overline{ \{ d\sp{1}, d\sp{2}, \ldots, d\sp{i} \}  }
\cap 
\overline{ \{ d\sp{i}, d\sp{i+1}, \ldots, d\sp{l} \}  \cup (X \setminus  B)  }
= \{ d\sp{i} \}
\quad 
\mbox{for $i=1,2,\ldots, l$}. 
\label{CL2cond3}
\end{align} 
\end{lemma}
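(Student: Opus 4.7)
The plan is to prove Lemma \ref{LMtam2=1005} via a separating hyperplane argument adapted to the finiteness of $X$. Since $d \notin \overline{R}$ and $\overline{R}$ is a polytope (or empty), the standard separation theorem yields a vector $c \in \RR\sp{m}$ with $c \cdot d > c \cdot r$ for every $r \in R$. The set of such separating $c$ is open in $\RR\sp{m}$, while the set of $c$ for which $c \cdot x_{1} = c \cdot x_{2}$ for some distinct $x_{1}, x_{2} \in X$ is a finite union of hyperplanes, hence nowhere dense. I therefore choose $c$ in the intersection of these two sets, ensuring both the separation of $d$ from $R$ and the pairwise distinctness of all values $\{ c \cdot x : x \in X \}$. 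Setting $\beta := \max_{r \in R} c \cdot r$ (or any $\beta < c \cdot d$ when $R = \emptyset$), I define
\[
B := \{ x \in X \mid c \cdot x > \beta \} .
\]

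The first three conditions follow almost immediately. By construction $R \cap B = \emptyset$, giving \eqref{CL2cond0}. Every $y \in \overline{X \setminus B}$ satisfies $c \cdot y \leq \beta < c \cdot d$ because $c \cdot x \leq \beta$ throughout $X \setminus B$, whence \eqref{CL2cond1}. Finally, letting $\gamma := \min_{x \in B} c \cdot x$, the finiteness of $B$ yields $\gamma > \beta$, so $\overline{B} \subseteq \{ y : c \cdot y \geq \gamma \}$ is disjoint from $\overline{X \setminus B} \subseteq \{ y : c \cdot y \leq \beta \}$, which is \eqref{CL2cond2}. Note also that $B$ is nonempty: since $d \in \overline{X}$, at least one vertex of $X$ attains $c \cdot x \geq c \cdot d > \beta$.

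For the ordering \eqref{CL2cond3}, I would enumerate $B = \{ d\sp{1}, d\sp{2}, \ldots, d\sp{l} \}$ in strictly decreasing order of the values $c \cdot d\sp{j}$, which is unambiguous thanks to the genericity of $c$. Then, for each $i$, the convex hull $\overline{\{ d\sp{1}, \ldots, d\sp{i} \}}$ lies in the closed half-space $\{ y : c \cdot y \geq c \cdot d\sp{i} \}$, whereas $\overline{\{ d\sp{i}, \ldots, d\sp{l} \} \cup (X \setminus B)}$ lies in the opposite half-space $\{ y : c \cdot y \leq c \cdot d\sp{i} \}$, the latter because $\max_{x \in X \setminus B} c \cdot x \leq \beta < c \cdot d\sp{l} \leq c \cdot d\sp{i}$. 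Therefore the intersection is forced into the hyperplane $\{ y : c \cdot y = c \cdot d\sp{i} \}$, and by strict monotonicity of the ordering only the single vertex $d\sp{i}$ in the prefix attains this value, collapsing the intersection to $\{ d\sp{i} \}$. The main obstacle I anticipate is the genericity step that forces pairwise distinct values of $c$ on $X$ while preserving the separation of $d$ from $R$; this is not deep, but it requires an open/dense observation rather than a bare appeal to the separation theorem, and it is what allows the same direction $c$ to do double duty (producing $B$ via a threshold cut and also inducing a linear ordering of $B$).
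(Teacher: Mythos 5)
Your proposal is correct and follows essentially the same route as the paper: strictly separate $d$ from $\overline{R}$ by a hyperplane, perturb the normal direction to make the linear functional injective on $X$, take $B$ as the vertices strictly on the $d$-side of a threshold, and order $B$ by decreasing value of the functional. Your explicit open-versus-finitely-many-hyperplanes argument for choosing a generic separating direction is a slightly more careful rendering of the paper's ``perturb the vector $a$'' step, and your handling of the cases $d \in X$ and $R = \emptyset$ is consistent with (indeed subsumes) the paper's case analysis.
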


\begin{figure}
\centering
\includegraphics[width=0.25\textwidth,clip]{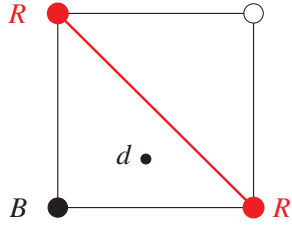}
\caption{Notations in Lemma \ref{LMtam2=1005}}
\label{FGrwbpoint}
\end{figure}

\begin{proof}
We first point out that 
\eqref{CL2cond3} is a refinement of \eqref{CL2cond2}.
Indeed,
\eqref{CL2cond3} for $i = l$ reads
$\overline{B} \cap \overline{ \{ d\sp{l} \} \cup (X \setminus B)} = \{ d\sp{l} \}$.
Since
$d\sp{l} \notin \overline{X \setminus B}$,
this implies
$\overline{B} \cap (\overline{X \setminus B}) = \emptyset$ in 
\eqref{CL2cond2}.
 
In the (special) case where
the given vector $d$ belongs to $X$,
$d$ is an extreme point of $\overline{X}$
and hence we can take $B = \{ d \}$
to meet the requirements
\eqref{CL2cond0},
\eqref{CL2cond1}, and \eqref{CL2cond3}.
In the following we assume $d \notin X$.

The given subset $R$ may be empty or nonempty.
Suppose first that $R \ne \emptyset$.
Since $d \notin \overline{R}$,
the point $d$ can be separated from $\overline{R}$ by a hyperplane.
More precisely, there exists a hyperplane
$H = \{ x \mid a\sp{\top} x = \delta \}$,
where $\delta  \in \RR$,
$a  \in \RR\sp{n}$, and 
$a\sp{\top} x = \sum_{i=1}\sp{n} a_{i} x_{i}$,
such that the (open) half spaces
$H\sp{+} := \{ x \mid  a\sp{\top} x > \delta \}$ and 
$H\sp{-} := \{ x \mid  a\sp{\top} x < \delta \}$
contain $d$ and $\overline{R}$, respectively.
It follows from $d \in H\sp{+}$
and $\overline{R} \subseteq H\sp{-}$ that
$B := H\sp{+} \cap X$ meets the requirements of
\eqref{CL2cond0} and \eqref{CL2cond1}.
Indeed, $B \subseteq X \setminus R$ 
in \eqref{CL2cond0} follows from $B \subseteq H\sp{+}$ and 
$R \subseteq H\sp{-}$, and
$d \notin \overline{X \setminus B}$ in \eqref{CL2cond1} follows from 
$d \in H\sp{+}$ and 
$X \setminus B \subseteq H\sp{-} \cup H$.
To meet \eqref{CL2cond3},
we perturb the vector $a$ so that $a\sp{\top} x$
are distinct for $x \in X \cup \{ d \}$,
and number the elements of $B = \{ d\sp{1}, d\sp{2}, \ldots, d\sp{l} \}$
so that 
$a\sp{\top} d\sp{1} > a\sp{\top} d\sp{2} > \cdots > a\sp{\top} d\sp{l}$.
In the remaining (rather exceptional) case where $R = \emptyset$,
we choose a vector $a$ for which
$a\sp{\top} x$
are distinct for $x \in X \cup \{ d \}$,
and define 
$\delta := a\sp{\top} d  - \varepsilon $
with a sufficiently small positive $\varepsilon$.
Using such $(a, \delta)$ we define
$H$, $H\sp{+}$, $H\sp{-}$, and $B = H\sp{+} \cap X$.
The rest of the argument is the same as in the case of $R \ne \emptyset$.
\end{proof}

We are ready to begin the proof of 
Proposition~\ref{PRconeICgenerator}.
Without loss of generality, we may assume $d \geq \veczero$,
since integral convexity of $S$ is preserved under 
coordinate inversions
$x_{i} \to -x_{i}$ for $i$ in an arbitrary subset of $\{ 1,2,\ldots,n\}$.

Let $X := N(d)$.  Then  $X  \subseteq \{ 0,1 \}\sp{n}$ and $d \in \overline{X}$.
Up to a permutation of coordinates, 
$X$ is equal to a set of the form
$\{ 1 \}\sp{p} \times \{ 0 \}\sp{q} \times \{ 0, 1 \}\sp{m}$
($p+q+m = n$; $p,q,m \geq 0$), so that we may identify $X$ with $\{ 0, 1 \}\sp{m}$.
Define
\begin{equation} \label{setRdef}
 R := \{ d' \in X \mid x^{0} + k d' \in S \  (k=1,2,\ldots)   \},
\end{equation}
or equivalently, $R := X \cap {\rm char.cone}\, \overline{S}$.
Then we have
$d \in \overline{R}$
if and only if there exist
$d\sp{1}, d\sp{2}, \ldots , d\sp{h} \in N(d)$
satisfying 
$d \in \overline{ \{ d\sp{1}, d\sp{2}, \ldots , d\sp{h} \} }$
and
\eqref{x0lamdjD}.
That is, our goal is to show 
$d \in \overline{R}$.
To prove this by contradiction, we assume
$d \notin \overline{R}$.

We have
$R \subseteq X$
and $d \in \overline{X} \setminus \overline{R}$,
where $X$ can be identified with $\{ 0, 1 \}\sp{m}$.
This allows us to use Lemma \ref{LMtam2=1005}
to obtain
$B = \{ d\sp{1}, d\sp{2}, \ldots, d\sp{l} \}$ 
$(\subseteq X)$
satisfying \eqref{CL2cond0}--\eqref{CL2cond3}.
Let
$j\sp{*} \in \{ 0,1,\ldots, l \}$
be the (uniquely determined) number such that
\begin{equation} \label{Prf1=1005}
x\sp{0} + d\sp{i} \notin S  
\quad (i=1,2,\ldots,j\sp{*}),  \qquad  
x\sp{0} + d\sp{j\sp{*}+1} \in S,
\end{equation}
where 
$j\sp{*} =0$ if $x\sp{0} + d\sp{1} \in S$, and
$j\sp{*} =l$ if $x\sp{0} + d\sp{i} \notin S$ for all $i=1,2,\ldots,l$.
Using this index $j\sp{*}$ we define 
$B\sp{*} = \{ d\sp{1}, d\sp{2}, \ldots, d\sp{j\sp{*}} \}$.
Note that $B\sp{*} = \emptyset$ if $j\sp{*} =0$,
and $B\sp{*} = B$ if $j\sp{*} =l$.

Let $y :=  x\sp{0} + d$.
By the assumption \eqref{x0lamdbarD},
namely, 
$d \in {\rm char.cone}\, \overline{S}$,
we have
$y \in \overline{S}$,
which, in turn, implies
$y \in \overline{N(y) \cap S}$
by integral convexity of $S$.
It follows from $N(d) = X$ and the definition of $B\sp{*}$ that%
\footnote{
For any vector $x$ and set $Y$, we use abbreviation $x + Y$ for $\{ x \} + Y$. 
}
\begin{equation*} 
 N(y) \cap  S 
= N(x\sp{0} + d) \cap  S 
= x\sp{0} + \{ d' \in X \mid x\sp{0} +d' \in S \}
\subseteq x\sp{0} + (X \setminus B\sp{*}) .
\end{equation*}
Hence
$y \in \overline{N(y) \cap S}\subseteq  x\sp{0} + \overline{X \setminus B\sp{*}}$,
that is, $d \in \overline{X \setminus B\sp{*}}$.
On the other hand, 
$d \notin \overline{X \setminus B}$ as shown in \eqref{CL2cond1}.
Thus we obtain 
\begin{equation} \label{yinx0XB}
d \in  \overline{X \setminus B\sp{*}}, \qquad
d \notin \overline{X \setminus B}.
\end{equation}
If $B\sp{*} = B$, these two assertion contradict each other,
and we are done.
If $B\sp{*}$ is a proper subset of $B$, 
we cannot derive a contradiction from \eqref{yinx0XB}.

We overcome this difficulty as follows.
Although the definition of $R$ in \eqref{setRdef} refers to $x\sp{0}$,
it is, in fact, independent of the initial point $x\sp{0}$,
as seen from the alternative expression
$R = X \cap {\rm char.cone}\, \overline{S}$.
The set $B$ is also independent of $x\sp{0}$,
whereas $B\sp{*}$, defined via \eqref{Prf1=1005}, 
varies with $x^{0}$, that is, 
$B\sp{*}= B\sp{*}(x^{0})$. 
Our strategy is to show that,
if $B\sp{*}(x^{0}) \ne B$, we can choose another initial point $x^{1}$
satisfying $B\sp{*}(x^{0}) \subsetneqq B\sp{*}(x^{1})$.
By repeating this process, we can increase $B\sp{*}$ until 
$B\sp{*} = B$.  
Then we obtain a contradiction from \eqref{yinx0XB}, 
to complete the proof of Proposition~\ref{PRconeICgenerator}.

\medskip

Since $d\sp{j\sp{*}+1} \in B$
and $R \cap B = \emptyset$
(cf.~\eqref{CL2cond0}),
we have
$d\sp{j\sp{*}+1} \notin R$, 
while
$x^{0} +  d\sp{j\sp{*}+1} \in S$
by \eqref{Prf1=1005}.
Therefore, there exists a positive integer
$k\sp{*} \ge 1$ such that
\begin{equation} \label{Prf1J*=1005}
x^{0} + k d\sp{j\sp{*}+1} \in S \ \ (k=1,2,\ldots, k\sp{*}),
\qquad
x^{0} + (k\sp{*} +1) d\sp{j\sp{*}+1} \notin S.
\end{equation}
This integer $k\sp{*}$ represents the maximum number of steps
from $x^{0}$ toward $d\sp{j\sp{*}+1}$ to stay in $S$.
We define
$x^{1} := x^{0} + k\sp{*} d\sp{j\sp{*}+1}$,
which is a point in $S$.
We shall show 
$B\sp{*}(x^{0}) \subsetneqq B\sp{*}(x^{1})$
by proving
\begin{align} 
& 
x\sp{1} + d\sp{i} \notin S
\quad (i=1,2,\ldots,j\sp{*}), 
\label{Bx1diinS}
\\
& x^{1} + d\sp{j\sp{*}+1} \notin S.
\label{Bx1dj1notinS}
\end{align} 
The second property \eqref{Bx1dj1notinS} is easy to prove. Namely,
\[
x^{1} + d\sp{j\sp{*}+1}
= (x^{0} + k\sp{*} d\sp{j\sp{*}+1})  + d\sp{j\sp{*}+1}
= x^{0} + (k\sp{*}+1) d\sp{j\sp{*}+1} \notin S
\]
using the definition of $k\sp{*}$ in \eqref{Prf1J*=1005}. 
To prove \eqref{Bx1diinS},
we consider a sequence of intermediate points, say, 
$x', x'', \ldots$ 
between $x^{0}$ and $x^{1}$,
where
$x' := x^{0} + d\sp{j\sp{*}+1}$,
$x'' := x^{0} + 2 d\sp{j\sp{*}+1}$, etc.

\begin{claim}  \label{CLxprime}
For $x' = x^{0} + d\sp{j\sp{*}+1}$
we have $x' \in S$ and 
\begin{align} 
x' + d\sp{i} \notin S
\quad (i=1,2,\ldots,j\sp{*}).  
\label{BxprimediinS}
\end{align} 
\end{claim}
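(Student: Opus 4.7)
My plan is to dispatch Claim~\ref{CLxprime} in two steps. The assertion $x' \in S$ is immediate from \eqref{Prf1J*=1005} applied with $k=1$ (and $k^{*} \ge 1$), so the substantive part is to verify $x' + d^{i} \notin S$ for each $i \in \{1,2,\ldots,j^{*}\}$. I intend to argue this by contradiction: assuming $x^{0} + d^{i} + d^{j^{*}+1} = x' + d^{i} \in S$, I will pass to the midpoint $y := x^{0} + (d^{i} + d^{j^{*}+1})/2$ of the two points $x^{0}$ and $x^{0} + d^{i} + d^{j^{*}+1}$, both of which lie in $\overline{S}$; by integral convexity of $S$ this forces $y \in \overline{S \cap N(y)}$.

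Next I will analyze $N(y)$. Because $d^{i}, d^{j^{*}+1} \in X \subseteq \{0,1\}^{n}$, every lattice point in $N(y)$ has the form $x^{0} + d''$ with $d'' \in X$ that agrees with $d^{i}$ and $d^{j^{*}+1}$ wherever those two coincide. The requirement $x^{0} + d'' \in S$ then excludes all of $B^{*}(x^{0}) = \{d^{1},\ldots,d^{j^{*}}\}$ by \eqref{Prf1=1005}, so the relevant $d''$ must lie in $\{d^{j^{*}+1}, d^{j^{*}+2}, \ldots, d^{l}\} \cup (X \setminus B)$. Expanding the inclusion $y \in \overline{S \cap N(y)}$ and subtracting $x^{0}$ thus yields
\[
\tfrac{1}{2}(d^{i} + d^{j^{*}+1}) \in \overline{\{d^{j^{*}+1}, d^{j^{*}+2}, \ldots, d^{l}\} \cup (X \setminus B)}.
\]
On the other hand, since $i \le j^{*}$, both $d^{i}$ and $d^{j^{*}+1}$ belong to $\{d^{1}, d^{2}, \ldots, d^{j^{*}+1}\}$, so their midpoint also lies in $\overline{\{d^{1}, d^{2}, \ldots, d^{j^{*}+1}\}}$.

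The finishing move is to apply \eqref{CL2cond3} at index $j^{*}+1$: the intersection of the two convex hulls above equals $\{d^{j^{*}+1}\}$, forcing $(d^{i} + d^{j^{*}+1})/2 = d^{j^{*}+1}$ and hence $d^{i} = d^{j^{*}+1}$, which contradicts $i \le j^{*} < j^{*}+1$. I expect the main obstacle to be choosing the correct index in \eqref{CL2cond3}: the naive attempt at index $i$ fails because $d^{j^{*}+1} \notin \{d^{1},\ldots,d^{i}\}$, so one cannot locate the midpoint inside $\overline{\{d^{1},\ldots,d^{i}\}}$; only at index $j^{*}+1$ does the left-hand convex hull absorb both $d^{i}$ and $d^{j^{*}+1}$ while the right-hand side matches precisely what the integral-convexity decomposition produces after removing $B^{*}(x^{0})$.
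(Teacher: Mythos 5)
Your argument is correct and is essentially the paper's own proof: both take the midpoint $\hat d=(d^{i}+d^{j^{*}+1})/2$, use the definition of $B^{*}$ together with integral convexity to place $\hat d$ in $\overline{X\setminus B^{*}}$, and then invoke \eqref{CL2cond3} at index $j^{*}+1$ to force $d^{i}=d^{j^{*}+1}$, a contradiction. Your closing remark about why index $j^{*}+1$ (rather than $i$) is the right one matches the paper's use of $\overline{B^{*}\cup\{d^{j^{*}+1}\}}\cap\overline{X\setminus B^{*}}=\{d^{j^{*}+1}\}$ exactly.
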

\begin{proof}
First, we see $x' \in S$ from \eqref{Prf1J*=1005}. 
To prove \eqref{BxprimediinS}, fix $i$ $(1 \leq i \leq j\sp{*})$ and define 
$\hat d := (d\sp{j\sp{*}+1} + d\sp{i})/2$.
We have
\[
\hat d = (d\sp{j\sp{*}+1} + d\sp{i})/2 \in \overline{ \{ d\sp{j\sp{*}+1}, d\sp{i} \} }
\subseteq \overline{ B\sp{*} \cup \{ d\sp{j\sp{*}+1} \} } .
\]
Since
$\hat d \ne d\sp{j\sp{*}+1}$ 
(which is equivalent to $d\sp{i} \ne d\sp{j\sp{*}+1}$)
and 
\begin{align*}
& \overline{ B\sp{*} \cup \{ d\sp{j\sp{*}+1} \} }
\cap
\overline{ X \setminus B\sp{*}  }
\\
& = \overline{ \{ d\sp{1}, d\sp{2}, \ldots, d\sp{j\sp{*}+1} \}  }
\cap 
\overline{ \{ d\sp{j\sp{*}+1},  \ldots, d\sp{l} \}  \cup (X \setminus  B)  }
\\ & 
= \{ d\sp{j\sp{*}+1} \}
\end{align*}
by \eqref{CL2cond3}, we have
\begin{equation} \label{hatdnotinXB*}
\hat d \notin \overline{ X \setminus B\sp{*}  } .
\end{equation}
On the other hand, 
it follows from the definition of $B\sp{*}$ that
\begin{equation} \label{x0XBNx0dS}
 x^{0} + \overline{ X \setminus B\sp{*}  }
 \  \supseteq \  \overline{ N(x^{0} + \hat d) \cap S } .
\end{equation}
Combining \eqref{hatdnotinXB*} and \eqref{x0XBNx0dS} we obtain
\begin{equation} \label{x0hatdnotinhullN}
x^{0} + \hat d \notin  \overline{ N(x^{0} + \hat d) \cap S  }.
\end{equation}
If $x' + d\sp{i} \in S$ were true,
we would obtain
\begin{equation} \label{x0hatdinhullS}
 x^{0} + \hat d  
 = x^{0} + \frac{1}{2} (d\sp{j\sp{*}+1} + d\sp{i})
 = \frac{1}{2} x^{0} + \frac{1}{2} ( x' + d\sp{i} )
 \in  \overline{S} ,
\end{equation}
which is a contradiction to \eqref{x0hatdnotinhullN},
since $S$ is integrally convex.
Therefore, we must have
$x' + d\sp{i} \notin S$, 
proving \eqref{BxprimediinS}.
\end{proof}

For the second intermediate point
$x''= x^{0} + 2 d\sp{j\sp{*}+1} = x' + d\sp{j\sp{*}+1}$,
we can prove
\begin{align*} 
x'' \in S, \qquad
x'' + d\sp{i} \notin S
\quad (i=1,2,\ldots,j\sp{*})
\end{align*} 
in a similar manner,
by replacing
$(x\sp{0},x')$ 
in the proof of Claim~\ref{CLxprime}
by $(x',x'')$.
Continuing in this way, we can show  
the statement \eqref{Bx1diinS}
at the new initial point $x\sp{1}$
where $B\sp{*}(x^{1})$ is strictly larger than $B\sp{*}(x^{0})$.

If $B\sp{*}(x^{1}) = B$, we are done,
with a contradiction from \eqref{yinx0XB}.
Otherwise, we repeat the same procedure 
to obtain a (finite) sequence 
$x^0, x^1, \ldots x^s$ of initial points 
such that the associated $B\sp{*}$ increases to $B$, i.e.,
$B\sp{*}(x^{0}) 
\subsetneqq B\sp{*}(x^{1})
\cdots \subsetneqq B\sp{*}(x^{s}) = B$.
This completes the proof of Proposition~\ref{PRconeICgenerator}.

\subsection{Proof of Proposition~\ref{PRreconeBI}}
\label{SCproofPRreconeBI}

In this section we prove Proposition~\ref{PRreconeBI},
stating that 
the characteristic cone of a box-integer polyhedron is box-integer.
By Proposition \ref{PRintpolyIC},
this statement can be rephrased (equivalently)
in terms of integral convexity as follows.

\begin{proposition} \label{PRreconeIC}
Let $S$ $(\subseteq \ZZ\sp{n})$ be an integrally convex set.
The characteristic cone $C$
of its convex hull $\overline{S}$
has the property that
$C \cap \ZZ\sp{n}$ is integrally convex.
\finboxARX
\end{proposition}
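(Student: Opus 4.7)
The plan is to adapt the proof of Proposition~\ref{PRconeICgenerator} to establish Proposition~\ref{PRreconeIC}. First I would observe that $C = \overline{C \cap \ZZ^n}$: by Proposition~\ref{PRreconeICgenerator} every $y \in C$ is a non-negative combination of $\{-1,0,+1\}$-vectors, each of which lies in $C \cap \ZZ^n$; padding with $0 \in C \cap \ZZ^n$ exhibits $y$ as a convex combination of integer points of $C$. It therefore suffices to show that every $y \in C$ satisfies $y \in \overline{N(y) \cap C \cap \ZZ^n}$; the case $y \in \ZZ^n$ is immediate, so I fix $y \in C \setminus \ZZ^n$.

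Following the structure of the proof of Proposition~\ref{PRconeICgenerator}, a coordinate inversion lets me assume $y \geq 0$. Setting $a := \lfloor y \rfloor$, $\tilde y := y - a \in [0,1)^n$, $\tilde X := N(y) - a$, and $\tilde R := (N(y) \cap C \cap \ZZ^n) - a$, the set $\tilde X$ naturally identifies with $\{0,1\}^m$ for $m = |\{i : y_i \notin \ZZ\}|$, and the target becomes $\tilde y \in \overline{\tilde R}$. Suppose for contradiction $\tilde y \notin \overline{\tilde R}$; apply Lemma~\ref{LMtam2=1005} to obtain an ordered set $\tilde B = \{\epsilon^{1}, \ldots, \epsilon^{l}\} \subseteq \tilde X$ satisfying \eqref{CL2cond0}--\eqref{CL2cond3}.

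Fix $x^{0} \in S$. Since $y \in C$, $x^{0} + y \in \overline{S}$, and integral convexity of $S$ applied at $x^{0} + y$ yields $\tilde y \in \overline{\{\epsilon \in \tilde X : x^{0} + a + \epsilon \in S\}}$. Let $j^{*}$ be the largest index with $x^{0} + a + \epsilon^{i} \notin S$ for all $i \leq j^{*}$, and set $B^{*}(x^{0}) := \{\epsilon^{1}, \ldots, \epsilon^{j^{*}}\} \subseteq \tilde B$. The preceding inclusion gives $\tilde y \in \overline{\tilde X \setminus B^{*}(x^{0})}$; if $B^{*}(x^{0}) = \tilde B$, this contradicts \eqref{CL2cond1}. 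Otherwise, hole-freeness of $S$ (inherited from integral convexity) together with $\epsilon^{j^{*}+1} \notin \tilde R$ yields a maximal $k^{*} \geq 1$ with $x^{0} + k(a + \epsilon^{j^{*}+1}) \in S$ for $1 \leq k \leq k^{*}$. Setting $x^{1} := x^{0} + k^{*}(a + \epsilon^{j^{*}+1}) \in S$, I would establish $B^{*}(x^{1}) \supsetneq B^{*}(x^{0})$ and iterate; as $|\tilde B|$ is finite, $B^{*}(x^{s}) = \tilde B$ after finitely many steps, producing the desired contradiction.

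The main obstacle is the analog of Claim~\ref{CLxprime}: proving that $x^{1} + a + \epsilon^{i} \notin S$ for every $i \leq j^{*}$. I would handle this by induction along the chain $x^{(k)} := x^{0} + k(a + \epsilon^{j^{*}+1}) \in S$, $0 \leq k \leq k^{*}$, with inductive hypothesis $x^{(k)} + a + \epsilon^{i} \notin S$ for all $i \leq j^{*}$. Supposing toward contradiction $x^{(k+1)} + a + \epsilon^{i} \in S$, the midpoint $m := x^{(k)} + a + \hat{\epsilon}$, where $\hat{\epsilon} := (\epsilon^{i} + \epsilon^{j^{*}+1})/2$, of the two $S$-points $x^{(k)}$ and $x^{(k+1)} + a + \epsilon^{i}$ lies in $\overline{S}$; integral convexity of $S$ combined with the inductive hypothesis then forces $\hat{\epsilon} \in \overline{\{\epsilon^{j^{*}+1}, \ldots, \epsilon^{l}\} \cup (\tilde X \setminus \tilde B)}$, while trivially $\hat{\epsilon} \in \overline{\{\epsilon^{1}, \ldots, \epsilon^{j^{*}+1}\}}$, so condition \eqref{CL2cond3} forces $\hat{\epsilon} = \epsilon^{j^{*}+1}$, contradicting $\epsilon^{i} \neq \epsilon^{j^{*}+1}$.
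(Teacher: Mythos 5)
Your proposal is correct, but it takes a genuinely different route from the paper. The paper proves Proposition~\ref{PRreconeIC} by contradiction via a polyhedral argument: it localizes a hypothetical failure of integral convexity of $D_I=(x^0+C)\cap\ZZ^n$ at a vertex $y^0$ of $\overline{N(y^0)}\cap D$ with $\overline{N(y^0)}$ of minimal dimension, forms a generic positive combination $a^{\top}x\le\beta$ of the constraints tight at $y^0$, and then translates $y^0$ by $x^*-x^0$, where $x^*$ maximizes $a^{\top}x$ over a bounded box-integer piece $Q$ of $\overline{S}=Q+C$, to produce a point $y^*\in\overline{S}$ with $y^*\notin\overline{N(y^*)\cap S}$, contradicting integral convexity of $S$. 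You instead bypass the vertex/separating-functional machinery entirely and rerun the proof of Proposition~\ref{PRconeICgenerator} at an arbitrary $y\in C$ rather than at a direction with $\|d\|_{\infty}=1$: writing $y=a+\tilde y$ with $a=\lfloor y\rfloor$, the roles of $X$, $R$, and the step directions $d'$ are played by $\tilde X=N(y)-a$, $\tilde R=(N(y)\cap C\cap\ZZ^n)-a$, and $a+\epsilon$, and nothing in Lemma~\ref{LMtam2=1005}, the $B^*$-augmentation along $x^{(k)}=x^0+k(a+\epsilon^{j^*+1})$, or the midpoint argument of Claim~\ref{CLxprime} actually uses that the step directions lie in $\{0,1\}^n$. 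I checked the delicate points: finiteness of $k^*$ follows from $\epsilon^{j^*+1}\in\tilde B$ forcing $a+\epsilon^{j^*+1}\notin C$ via \eqref{charconeLambda} (this, rather than hole-freeness, is the real reason); $N(\hat\epsilon)\subseteq\tilde X$ so the inductive hypothesis controls $N(m)\cap S$; and \eqref{CL2cond3} at index $j^*+1$ delivers the contradiction exactly as in the paper. What your approach buys is unification: it shows that Proposition~\ref{PRconeICgenerator} and Proposition~\ref{PRreconeIC} are both instances of the single statement that every $y\in C$ satisfies $y\in\overline{N(y)\cap C\cap\ZZ^n}$, proved by one combinatorial iteration; what the paper's approach buys is that the second proof is independent of the first's internal machinery, reusing Proposition~\ref{PRreconeICgenerator} only as a black box to know that $C$ is an integer polyhedron. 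Two cosmetic points: your justification of $C=\overline{C\cap\ZZ^n}$ by ``padding with $0$'' needs a rescaling (replace each generator $d^j$ by $Nd^j$ for a large integer $N$) when the nonnegative coefficients sum to more than $1$, and the attribution of the finiteness of $k^*$ to hole-freeness should be replaced by the argument via \eqref{charconeLambda} just indicated.
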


We begin the proof of 
Proposition~\ref{PRreconeIC}
by observing that
the convex hull $\overline{S}$
can be represented as 
$\overline{S} = Q +  C$
with a bounded box-integer polyhedron $Q$
and a polyhedral cone $C$.
Indeed, by Proposition~\ref{PRdecthmP},
we can decompose $\overline{S}$ as
$\overline{S} = \hat{Q} +  C$,
where $\hat{Q}$ is a polytope and 
$C$ is the characteristic cone of $\overline{S}$.
Take a bounded integral box $B$ containing $\hat{Q}$
and define
$Q := \overline{S} \cap B$,
which is a bounded box-integer polyhedron.
Since
$Q = \overline{S} \cap B = 
(\hat{Q} +  C) \cap B \supseteq \hat{Q} \cap B = \hat{Q}$,
we obtain
$Q + C \supseteq  \hat{Q} + C = \overline{S}$.
The reverse inclusion $Q + C \subseteq \overline{S}$
follows from $Q \subseteq \overline{S}$
and $\overline{S} + C = \overline{S}$ 
(cf.~\eqref{charconePCP}) 
as $Q + C \subseteq \overline{S} + C = \overline{S}$.

\medskip

We prove Proposition~\ref{PRreconeIC} by contradiction.
Namely, we assume that
$C \cap \ZZ\sp{n}$
is not integrally convex and derive 
a contradiction to the integral convexity of $S$.
We shall construct a point 
$y\sp{*} \in \overline{S}$ 
with the property
$y^{*} \notin  \overline{N(y^{*}) \cap  S }$.
We start with an arbitrary
$x^{0} \in Q \cap \ZZ\sp{n}$
and find a point $y^{0} \in x^{0} + C$
with some properties (Claim~\ref{CLy0abc} below).
We consider a system of inequalities
describing $x^{0} + C$.
With reference to the inequalities tight at $y^{0}$,
we find a vertex $x\sp{*}$ of $Q$.
Then the point $y\sp{*}$ is constructed as
$y\sp{*} = y^{0} + (x\sp{*} - x^{0})$
in \eqref{y*Defy0x*x0} below.

Recalling that $Q$ is a nonempty integer polyhedron,
take any $x^{0} \in Q \cap \ZZ\sp{n}$ and define
\[
D := x^{0} + C,  \qquad
D_I := D \cap \ZZ\sp{n}.
\]
By Proposition~\ref{PRreconeICgenerator},
$C$ is an integer polyhedron,
which implies that
$D$ is an integer polyhedron and $D = \overline{D_{I}}$.
The set $D_I$ is not integrally convex
as a consequence of the assumption that 
$C \cap \ZZ\sp{n}$ is not integrally convex.

\begin{claim}  \label{CLy0abc}
There exists $y^{0} \in D$ that satisfies the following conditions:
\begin{align} 
& y^{0} \notin \overline{N(y^{0}) \cap D_{I}},
\label{CLy0abcA}
\\ 
& \mbox{\rm $y^{0}$ is a vertex of $\overline{N(y^{0})} \cap D$},
\label{CLy0abcB}
\\
& \mbox{\rm $y^{0}$ is a relative interior point of $\overline{N(y^{0})}$}.
\label{CLy0abcC}
\end{align} 
\end{claim}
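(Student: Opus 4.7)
The plan is to exploit the assumed failure of integral convexity of $D_{I}$ to first produce a separating hyperplane and then select a vertex of a suitable local polytope. First, since $D = \overline{D_{I}}$ but $D_{I}$ is not integrally convex, the definition yields some $\tilde y \in D$ with $\tilde y \notin \overline{N(\tilde y) \cap D_{I}}$. Separating $\tilde y$ from this polytope gives a vector $c \in \RR^{n}$ and a scalar $\delta$ such that $c \cdot \tilde y > \delta$ and $c \cdot z < \delta$ for every $z \in N(\tilde y) \cap D_{I}$.

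Second, I would form the polytope $P := \overline{N(\tilde y)} \cap D$, which is bounded (being contained in the unit sub-box $\overline{N(\tilde y)}$), contains $\tilde y$, and on which the linear function $c$ attains its maximum on some nonempty face. That face contains at least one vertex $y^{0}$ of $P$, and by construction $c \cdot y^{0} \geq c \cdot \tilde y > \delta$. The claim is that this $y^{0}$ satisfies conditions (A)--(C).

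The verification rests on the nesting $N(y^{0}) \subseteq N(\tilde y)$, which holds because $y^{0} \in \overline{N(\tilde y)}$ forces $\lfloor \tilde y_{i} \rfloor \leq \lfloor y^{0}_{i} \rfloor \leq \lceil y^{0}_{i} \rceil \leq \lceil \tilde y_{i} \rceil$ in every coordinate. For (A), this nesting gives $N(y^{0}) \cap D_{I} \subseteq N(\tilde y) \cap D_{I}$, and every $z$ in the latter satisfies $c \cdot z < \delta < c \cdot y^{0}$, so $y^{0}$ is strictly separated from $\overline{N(y^{0}) \cap D_{I}}$. Condition (C) is essentially automatic: in each coordinate, $y^{0}_{i}$ is either an integer (making the $i$-th factor of $\overline{N(y^{0})}$ the singleton $\{y^{0}_{i}\}$, relative to which $y^{0}_{i}$ is trivially interior) or a non-integer lying strictly between $\lfloor y^{0}_{i}\rfloor$ and $\lceil y^{0}_{i}\rceil$.

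The step I expect to scrutinize most is (B), because $y^{0}$ is chosen as a vertex of $P$, not of the potentially smaller polytope $\overline{N(y^{0})} \cap D$. The resolution will use the nesting once more, via $\overline{N(y^{0})} \cap D \subseteq \overline{N(\tilde y)} \cap D = P$; since an extreme point of $P$ remains extreme in any sub-polytope that contains it, and $y^{0}$ lies in $\overline{N(y^{0})} \cap D$, it is a vertex of this smaller polytope as required. The main subtlety, namely the possibility that $y^{0}$ lands on the boundary of $\overline{N(\tilde y)}$ and so has $N(y^{0}) \subsetneq N(\tilde y)$, is precisely what forces one to distinguish between $N(\tilde y)$ and $N(y^{0})$, and it is absorbed cleanly by this inheritance of extremality under sub-polytope restriction.
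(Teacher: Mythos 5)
Your proof is correct, but it follows a genuinely different route from the paper's. The paper picks the witness $z$ of non-integral-convexity so as to \emph{minimize} $\dim \overline{N(z)}$, takes a vertex $v$ of $\overline{N(z)} \cap D$ outside $\overline{N(z) \cap D_{I}}$ (one exists, since otherwise every vertex, hence the whole polytope, would lie in that hull, contradicting $z$'s membership), and then uses the minimality of the dimension to force $v$ into the relative interior of $\overline{N(z)}$, which yields $N(v)=N(z)$ and lets all three conditions transfer verbatim. You instead take an arbitrary witness $\tilde y$, produce a strictly separating functional $c$, and obtain $y^{0}$ by maximizing $c$ over $\overline{N(\tilde y)} \cap D$; the possibility that $N(y^{0})$ is strictly smaller than $N(\tilde y)$ is then absorbed by two monotonicity observations --- \eqref{CLy0abcA} survives because $N(y^{0}) \cap D_{I} \subseteq N(\tilde y) \cap D_{I}$ stays strictly on the far side of the hyperplane while $c\sp{\top} y\sp{0} \geq c\sp{\top}\tilde y > \delta$, and \eqref{CLy0abcB} survives because an extreme point of a polytope remains extreme in every convex subset containing it --- together with your (correct) remark that \eqref{CLy0abcC} holds automatically for every point of $\RR\sp{n}$, since each coordinate is either an integer (a singleton factor) or strictly between its floor and ceiling. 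Your version avoids the extremal choice and the dimension-comparison argument entirely, at the cost of invoking a separation theorem; the paper's version buys the stronger identity $N(v)=N(z)$, which is not actually needed for the claim as stated. The only loose end is the degenerate case $N(\tilde y)\cap D_{I}=\emptyset$, where there is nothing to separate; condition \eqref{CLy0abcA} is then vacuous for any choice of $y\sp{0}$, so this is a one-line remark rather than a gap.
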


\begin{proof}
Since $D_I$ is not integrally convex,
there exists
$z \in \overline{D_{I}}$
such that $z \notin \overline{N(z) \cap D_{I}}$.
Take such $z$ 
with the smallest dimension of $\overline{N(z)}$.
Note that $\overline{N(z)}$ is an integral box of 
the form 
$\{ x \in \RR\sp{n} \mid l   \leq x \leq  u \}$
for some 
$l , u \in \ZZ\sp{n}$ with 
$\| u - l \, \|_{\infty} \leq 1$
and the dimension of $\overline{N(z)}$ is equal to
the number of indices $i$ satisfying  
$u_{i} - l_{i} = 1$.

\begin{figure}
\centering
\includegraphics[width=0.31\textwidth,clip]{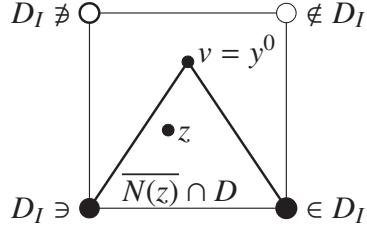}
\caption{Notations in the proof of Claim~\ref{CLy0abc}
(The square represents $\overline{N(z)}$ 
and the triangle is $\overline{N(z)} \cap D$)}
\label{FGdecNy0Dv}
\end{figure}

The set $\overline{N(z)} \cap D$ is a bounded polyhedron, and
$( \overline{N(z)} \cap D ) \setminus \overline{N(z) \cap D_{I}} \ne \emptyset$
since $z \in  ( \overline{N(z)} \cap D ) \setminus \overline{N(z) \cap D_{I}}$.
Hence there is a vertex $v$ of 
$\overline{N(z)} \cap D$
not contained in $\overline{N(z) \cap D_{I}}$
(see Figure \ref{FGdecNy0Dv}).
The vertex $v$ is a relative interior point of $\overline{N(z)}$,
because, otherwise,
we would have
$\dim \overline{N(v)} < \dim \overline{N(z)}$
while
$v \notin \overline{N(v) \cap D_{I}}$ from 
$v \notin \overline{N(z) \cap D_{I}} \supseteq \overline{N(v) \cap D_{I}}$,
a contradiction to our choice of $z$.
Since $v$ is a relative interior point of $\overline{N(z)}$,
we have $N(v) = N(z)$.
Let $y^{0}:=v$, which satisfies the three conditions
\eqref{CLy0abcA}--\eqref{CLy0abcC}.
\end{proof}

Consider a (non-redundant) system of inequalities
describing $D$.
Since 
$y^{0}$ 
is a vertex of $\overline{N(y^{0})} \cap D$
lying in the relative interior of 
$\overline{N(y^{0})}$,
at least one inequality is tight (i.e., satisfied in equality).
Enumerate all such inequalities as
\begin{equation} \label{Tam2IneqSys}
 a_{1}\sp{\top} x \le \beta_{1}, \ \ 
 a_{2}\sp{\top} x \le \beta_{2}, \ \ 
 \ldots , \ \ 
 a_{k}\sp{\top} x \le \beta_{k},
\end{equation} 
where $k \ge 1$.
By definition we have
$a_{i}\sp{\top} y^{0} = \beta_{i}$ for $i=1,2,\ldots,k$.
Since $D = x^{0} +C$
and $C$ is a cone,
all the inequalities in \eqref{Tam2IneqSys} are also tight at $x^{0}$, that is,
$a_{i}\sp{\top} x^{0} = \beta_{i}$ for $i=1,2,\ldots,k$.

\begin{claim}  \label{CLexistmu}
There exist some positive coefficients
$\mu_{1}, \mu_{2}, \ldots, \mu_{k} > 0$
such that 
$a = \sum_{i=1}\sp{k} \mu_{i} a_{i}$
and $\beta = \sum_{i=1}\sp{k} \mu_{i} \beta_{i}$
satisfy
\begin{align} 
& 
a\sp{\top} x^{0} = \beta, \quad 
a\sp{\top} y^{0} = \beta,
\label{Ineqx0y0} \\
& 
a\sp{\top} x  \leq  \beta
\qquad (\forall x \in  D),
\label{IneqInD} \\
& 
a\sp{\top} x  \leq  0
\qquad (\forall x \in C),
\label{IneqInC} \\
& a\sp{\top} x \ne \beta
\qquad (\forall x \in N(y^{0}) \setminus D_{I}).
\label{IneqNotDI}
\end{align}
\end{claim}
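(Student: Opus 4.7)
The plan is to choose $a$ and $\beta$ as strictly-positive combinations of the tight inequality data. Setting $a = \sum_{i=1}^{k} \mu_i a_i$ and $\beta = \sum_{i=1}^{k} \mu_i \beta_i$ with $\mu_i > 0$, three of the four required conditions hold automatically: \eqref{Ineqx0y0} is immediate from $a_i^{\top} x^{0} = a_i^{\top} y^{0} = \beta_i$; \eqref{IneqInD} follows from the positivity of the $\mu_i$'s combined with $a_i^{\top} x \leq \beta_i$ for $x \in D$; and \eqref{IneqInC} then follows by translation, using $C = D - x^{0}$ together with $a^{\top} x^{0} = \beta$. Thus the entire task reduces to choosing $\mu = (\mu_1, \ldots, \mu_k) \in \RR^{k}_{>0}$ so that condition \eqref{IneqNotDI} is also met.

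For \eqref{IneqNotDI}, the crucial intermediate claim is that for every $z \in N(y^{0}) \setminus D_I$ the vector $(a_i^{\top} z - \beta_i)_{i=1}^{k}$ is not the zero vector. Once this is in place, each such $z$ determines a proper hyperplane $H_z = \{\mu \in \RR^{k} : \sum_i \mu_i (a_i^{\top} z - \beta_i) = 0\}$, and the finite union $\bigcup_{z} H_z$ (finite because $N(y^{0})$ is finite) cannot cover the open orthant $\RR^{k}_{>0}$. A generic choice of $\mu$ in $\RR^{k}_{>0}$ then simultaneously avoids every $H_z$, yielding $a^{\top} z \ne \beta$ for all $z \in N(y^{0}) \setminus D_I$ as required.

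The main obstacle is the nonzero claim above, which I would prove by a linear-algebraic argument that uses Claim~\ref{CLy0abc} in an essential way. Because $y^{0}$ is a vertex of $\overline{N(y^{0})} \cap D$ lying in the relative interior of $\overline{N(y^{0})}$ by \eqref{CLy0abcB} and \eqref{CLy0abcC}, the only box constraints of $\overline{N(y^{0})}$ tight at $y^{0}$ are the equalities $x_j = y^{0}_j$ for indices $j$ with $y^{0}_j \in \ZZ$. Hence the vertex condition forces $\{a_1, \ldots, a_k\} \cup \{e_j : y^{0}_j \in \ZZ\}$ to span $\RR^{n}$, so $y^{0}$ is the unique common solution of the system $\{a_i^{\top} x = \beta_i : 1 \leq i \leq k\} \cup \{x_j = y^{0}_j : y^{0}_j \in \ZZ\}$. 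On the other hand, any $z \in N(y^{0})$ automatically satisfies $z_j = y^{0}_j$ whenever $y^{0}_j \in \ZZ$ by \eqref{intneighbordeffloorceil}, while \eqref{CLy0abcA} forces $y^{0} \notin \ZZ^{n}$ (otherwise $y^{0} \in N(y^{0}) \cap D_I \subseteq \overline{N(y^{0}) \cap D_I}$, a contradiction). Therefore $a_i^{\top} z = \beta_i$ for every $i$ would force $z = y^{0}$, which is impossible since $z \in \ZZ^{n}$ but $y^{0} \notin \ZZ^{n}$.
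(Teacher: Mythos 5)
Your proof is correct and follows essentially the same route as the paper: the first three conditions hold for arbitrary positive $\mu$, and \eqref{IneqNotDI} is obtained by observing that the vertex property \eqref{CLy0abcB}, combined with \eqref{CLy0abcC}, forces $y^{0}$ to be the unique point of $\overline{N(y^{0})}$ satisfying $a_{i}\sp{\top}x=\beta_{i}$ for all $i$, after which a generic choice of positive $\mu$ works. You merely make explicit two details the paper leaves implicit, namely that $y^{0}\notin\ZZ\sp{n}$ and that a finite union of proper hyperplanes in $\mu$-space cannot cover the open positive orthant.
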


\begin{proof}
\eqref{Ineqx0y0}--\eqref{IneqInC}
hold for any  $\mu_{1}, \mu_{2}, \ldots, \mu_{k} >0$.
\eqref{Ineqx0y0} is immediate from the tightness
$a_{i}\sp{\top} x^{0} = a_{i}\sp{\top} y^{0} = \beta_{i}$
for $i=1,2,\ldots,k$.
\eqref{IneqInD} holds since
the inequalities in \eqref{Tam2IneqSys} are valid for $D$. 
\eqref{IneqInC} follows from 
\eqref{Ineqx0y0} and \eqref{IneqInD}
because $D=x^{0}+C$ and $C$ is a cone.
\eqref{IneqNotDI} can be shown as follows.
Since $y^{0}$ is a vertex, 
the intersection of 
$\overline{N(y^{0})}$ and
the hyperplanes 
$a_{i}\sp{\top} x = \beta_{i}$
$(i=1,2,\ldots,k)$
consists of a single vector $y^{0}$,
that is,
for $x \in \overline{N(y^{0})}$, we have
$a_{i}\sp{\top} x = \beta_{i}$ for all $i=1,2,\ldots,k$
if and only if $x  = y^{0}$.
Therefore, for each $x \in N(y^{0}) \setminus D_{I}$,
there is some $i$ with
$a_{i}\sp{\top} x \ne \beta_{i}$.
It then follows that
\eqref{IneqNotDI} holds
for randomly chosen
$\mu_{1}, \mu_{2}, \ldots, \mu_{k} > 0$.
\end{proof}

\begin{claim}  \label{CLtam3}
\begin{equation}  \label{Tam3y0notinLargerSet}
  y^{0} \notin 
 \overline{N(y^{0}) \cap  \{  x \in \ZZ\sp{n} \mid 
 a\sp{\top} x \le \beta \} }.
\end{equation}
\end{claim}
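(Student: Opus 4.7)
The plan is to prove Claim \ref{CLtam3} by contradiction, exploiting the fact that $a^{\top}y^{0}=\beta$ is the maximum value of $a^{\top}x$ on $D$ combined with the tightness information already distilled into properties \eqref{IneqInD} and \eqref{IneqNotDI}. The contradiction we aim for is with property \eqref{CLy0abcA} of $y^{0}$, namely that $y^{0}\notin\overline{N(y^{0})\cap D_{I}}$. So the task is to upgrade ``contained in the half-space cut'' to ``contained in $D_{I}$'' for every point of $N(y^{0})$ that can appear with positive weight in a convex combination representing $y^{0}$.

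First I would assume for contradiction that
\[
y^{0}\in\overline{N(y^{0})\cap\{x\in\ZZ\sp{n}\mid a\sp{\top}x\le\beta\}},
\]
and write $y^{0}=\sum_{x} \lambda_{x}\, x$, where the sum is over a finite set of points $x\in N(y^{0})$ with $a\sp{\top}x\le\beta$, and $\lambda_{x}\ge 0$ with $\sum_{x}\lambda_{x}=1$. Applying $a\sp{\top}$ and using \eqref{Ineqx0y0}, I would obtain $\beta=a\sp{\top}y^{0}=\sum_{x}\lambda_{x}(a\sp{\top}x)$. Since every summand satisfies $a\sp{\top}x\le\beta$, the only way a convex combination of these numbers can equal $\beta$ is for $a\sp{\top}x=\beta$ to hold for each $x$ with $\lambda_{x}>0$.

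Next I would apply the contrapositive of property \eqref{IneqNotDI}: for any $x\in N(y^{0})$, if $a\sp{\top}x=\beta$ then $x\in D_{I}$. Thus every $x$ appearing with positive weight $\lambda_{x}$ lies in $N(y^{0})\cap D_{I}$, which yields $y^{0}\in\overline{N(y^{0})\cap D_{I}}$. This directly contradicts \eqref{CLy0abcA}, completing the proof.

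I do not expect any genuine obstacle here; all the work has already been done when setting up Claim \ref{CLexistmu}, whose whole purpose is to produce a single linear functional $a\sp{\top}x$ that attains its maximum on $D$ exactly at $y^{0}$ among $N(y^{0})$, and attains this maximum only at points of $D_{I}$. The argument is then just the standard ``support-hyperplane'' observation: no convex combination can exceed the maximum, and equality in a convex combination forces all active atoms to lie on the supporting hyperplane, which here forces them into $D_{I}$.
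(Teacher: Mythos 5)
Your proof is correct and follows essentially the same route as the paper: the paper also splits the points of $N(y^{0})$ satisfying $a^{\top}x\le\beta$ into those in $D_{I}$ (handled by \eqref{CLy0abcA}) and those outside $D_{I}$ (which satisfy $a^{\top}x<\beta$ by \eqref{IneqNotDI}), and then invokes the same supporting-hyperplane observation that a convex combination equal to $\beta$ cannot use atoms with value strictly below $\beta$. Your write-up merely makes explicit the final step that the paper leaves as ``Then \eqref{Tam3y0notinLargerSet2} follows.''
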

\begin{proof}
Recall from \eqref{IneqInD} that
$\{  x \in \ZZ\sp{n} \mid  a\sp{\top} x \le \beta \} \supseteq D_{I}$.
Using  notation
$E := \{  x \in \ZZ\sp{n} \mid  a\sp{\top} x \le \beta \} \setminus D_{I}$,
we can rewrite \eqref{Tam3y0notinLargerSet} as
\begin{equation}  \label{Tam3y0notinLargerSet2}
  y^{0} \notin \overline{N(y^{0}) \cap  (E \cup D_{I}) }
\ 
 = \overline{(N(y^{0}) \cap  D_{I}) \cup (N(y^{0}) \cap  E) }.
\end{equation}
We have
$y^{0} \notin \overline{N(y^{0}) \cap  D_{I} }$ 
in \eqref{CLy0abcA} and 
$a\sp{\top} y^{0} = \beta$ in \eqref{Ineqx0y0}, 
whereas
$a\sp{\top} x < \beta$
for all $x \in N(y^{0}) \cap E$
by \eqref{IneqNotDI}.
Then \eqref{Tam3y0notinLargerSet2} follows.
\end{proof}

Let $\beta\sp{*}$ denote the maximum value of $a\sp{\top} x $ over $Q$, 
that is, 
\begin{equation} \label{beta*def}
 \beta\sp{*} := \max \{ a\sp{\top} x \mid x \in Q  \}.
\end{equation}
Since $Q$ is a bounded integer polyhedron,
we may assume that this maximum is attained by an integer vector
$x\sp{*} \in Q \cap \ZZ\sp{n}$.
Define $y^{*} \in \RR\sp{n}$ by
\begin{equation}  \label{y*Defy0x*x0}
y\sp{*} := y^{0} + (x\sp{*} - x^{0}) .
\end{equation}
We have
$y\sp{*} \in \overline{S}$,
since
$y\sp{*} = x\sp{*} + (y^{0}  - x^{0})  \in Q + (D -  x^{0}) = Q+C =\overline{S}$.

\begin{claim}  \label{CLtam4}
\begin{equation}  \label{Tam7y0notinLargerSet}
  y^{*} \notin 
 \overline{N(y^{*}) \cap  
\{  x \in \ZZ\sp{n} \mid a\sp{\top} x \le \beta\sp{*}  \} } .
\end{equation}
\end{claim}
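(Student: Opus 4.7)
The plan is to prove Claim~\ref{CLtam4} by a simple translation argument, reducing it to Claim~\ref{CLtam3} via the shift $v := x^{*} - x^{0}$. The whole point of defining $y^{*} = y^{0} + (x^{*} - x^{0})$ and $\beta^{*} = a^{\top} x^{*}$ is precisely that everything fits together so that the picture at $y^{*}$ is just the integer translate of the picture at $y^{0}$.

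First I would record the key arithmetic: since $x^{*}, x^{0} \in \ZZ^{n}$, the vector $v$ is integral, and using $a^{\top} x^{0} = \beta$ from \eqref{Ineqx0y0} together with $a^{\top} x^{*} = \beta^{*}$ from \eqref{beta*def}, we get $a^{\top} v = \beta^{*} - \beta$. Consequently $a^{\top} y^{*} = a^{\top} y^{0} + a^{\top} v = \beta^{*}$. Because $v$ is integral, translation by $v$ preserves integer neighborhoods, so $N(y^{*}) = N(y^{0}) + v$; and because $a^{\top} v = \beta^{*} - \beta$, translation by $v$ also carries the half-space $\{x \in \ZZ^{n} \mid a^{\top} x \le \beta\}$ bijectively onto $\{x \in \ZZ^{n} \mid a^{\top} x \le \beta^{*}\}$.

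Combining these, I would show the set identity
\[
 N(y^{*}) \cap \{x \in \ZZ^{n} \mid a^{\top} x \le \beta^{*}\}
 = \bigl(N(y^{0}) \cap \{x \in \ZZ^{n} \mid a^{\top} x \le \beta\}\bigr) + v.
\]
Taking convex hulls (which commute with translation by a fixed vector), the right-hand side becomes $\overline{N(y^{0}) \cap \{x \in \ZZ^{n} \mid a^{\top} x \le \beta\}} + v$. If Claim~\ref{CLtam4} were to fail, then $y^{*}$ would lie in the left-hand side, hence $y^{0} = y^{*} - v$ would lie in $\overline{N(y^{0}) \cap \{x \in \ZZ^{n} \mid a^{\top} x \le \beta\}}$, contradicting Claim~\ref{CLtam3}.

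There is no real obstacle here: the work has already been done in Claim~\ref{CLtam3} (the geometry around $y^{0}$) and in the choice of $x^{*}$ (which calibrates $\beta^{*}$ relative to $\beta$ so that the translation is compatible with the half-space). The only thing to be slightly careful about is verifying the two set-level equalities ($N(y^{*}) = N(y^{0}) + v$ and the half-space shift), both of which rely on $v$ being an integer vector.
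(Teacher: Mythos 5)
Your proposal is correct and follows essentially the same route as the paper: both translate the statement of Claim~\ref{CLtam3} by the integer vector $x^{*}-x^{0}$, using $x^{*}-x^{0}\in\ZZ^{n}$ to shift the integral neighborhood and $a^{\top}x^{*}=\beta^{*}$, $a^{\top}x^{0}=\beta$ to shift the half-space. No gaps.
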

\begin{proof}
Recall
from \eqref{Tam3y0notinLargerSet}
that
\[ 
  y^{0} \notin 
 \overline{N(y^{0}) \cap  \{  x \in \ZZ\sp{n} \mid 
 a\sp{\top} x \le \beta \} }.
\] 
By adding 
$x\sp{*} - x^{0}$
to the left-hand side, we obtain
$y\sp{*} = y^{0} + (x\sp{*} - x^{0})$.
On the right-hand side,
we have
$N(y^{0}) + (x\sp{*} - x^{0}) = N(y^{0} + x\sp{*} - x^{0}) = N(y^{*})$,
where the first equality is true by $x\sp{*} - x^{0} \in \ZZ\sp{n}$.
Since
$a\sp{\top} x\sp{*} = \beta\sp{*}$ by the definition of $x\sp{*}$ and
$a\sp{\top} x\sp{0} = \beta$ by \eqref{Ineqx0y0},
we also have
\begin{align*} 
\{  x  \mid a\sp{\top} x \le \beta  \}  + (x\sp{*} - x^{0})
& = \{  x + (x\sp{*} - x^{0})  \mid a\sp{\top} x \le \beta  \}  
\\ & = \{  z  \mid a\sp{\top} (z - x\sp{*} + x^{0}) \le \beta  \}  
= \{  z   \mid a\sp{\top} z \le \beta\sp{*}  \} .
\end{align*} 
Thus we obtain \eqref{Tam7y0notinLargerSet}.
\end{proof}

\begin{claim}  \label{CLtam5}
\begin{equation}  \label{Tam5Sincluded}
 S \subseteq \{  x \in \ZZ\sp{n} \mid a\sp{\top} x \le \beta\sp{*}  \} .
\end{equation}
\end{claim}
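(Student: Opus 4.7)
The plan is to deduce \eqref{Tam5Sincluded} directly from the decomposition $\overline{S} = Q + C$ that was established at the beginning of this section, together with the two bounds on $a$ already collected in Claim~\ref{CLexistmu}. Namely, since $S \subseteq \overline{S}$, it suffices to prove the stronger statement $a^{\top} x \leq \beta^{*}$ for every $x \in \overline{S}$, and this is essentially a one-line computation once we write $x$ in the form $q + c$ with $q \in Q$ and $c \in C$.

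Concretely, I would take an arbitrary $s \in S$, apply $S \subseteq \overline{S} = Q + C$ to obtain a representation $s = q + c$ with $q \in Q$, $c \in C$, and then compute
\[
a^{\top} s \;=\; a^{\top} q + a^{\top} c \;\leq\; \beta^{*} + 0 \;=\; \beta^{*},
\]
where the first inequality uses the definition $\beta^{*} = \max\{ a^{\top} x \mid x \in Q \}$ from \eqref{beta*def} to bound $a^{\top} q$, and uses \eqref{IneqInC} from Claim~\ref{CLexistmu} to bound $a^{\top} c$. Since $s \in \ZZ^{n}$ by assumption, this places $s$ in $\{ x \in \ZZ^{n} \mid a^{\top} x \leq \beta^{*} \}$, proving \eqref{Tam5Sincluded}.

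There is no real obstacle here: the whole point of the linear functional $a$ constructed in Claim~\ref{CLexistmu} was to be nonpositive on the recession cone $C$, and the whole point of $\beta^{*}$ is to be its maximum over the bounded part $Q$. The only thing worth double-checking is that we are indeed entitled to write every point of $\overline{S}$ as $q + c$ with $q \in Q$ and $c \in C$, but this is exactly the Minkowski decomposition recalled at the start of the section (an analogue of Theorem~\ref{THboxintDec} applied to $\overline{S}$, using Propositions~\ref{PRdecthmP} and \ref{charconePCP}). Once Claim~\ref{CLtam5} is in hand, it combines with Claim~\ref{CLtam4} to yield $y^{*} \notin \overline{N(y^{*}) \cap S}$, which, together with $y^{*} \in \overline{S}$, contradicts the integral convexity of $S$ and completes the proof of Proposition~\ref{PRreconeIC}.
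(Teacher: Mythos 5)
Your proof is correct and is essentially identical to the paper's own argument: both bound $a^{\top}q \le \beta^{*}$ via the definition \eqref{beta*def} and $a^{\top}c \le 0$ via \eqref{IneqInC}, using the decomposition $\overline{S} = Q + C$ established at the start of the section. (Minor point: the fact $\overline{S}+C=\overline{S}$ you invoke is equation \eqref{charconePCP}, not a proposition, but this does not affect the argument.)
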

\begin{proof}
We have 
$Q \subseteq \{ x \mid a\sp{\top} x \leq \beta\sp{*}  \}$
by the definition \eqref{beta*def} of $\beta\sp{*}$,
whereas
$C \subseteq \{ x \mid a\sp{\top} x \leq 0  \}$
by \eqref{IneqInC}.
Therefore, every $x \in \overline{S} = Q + C$
satisfies $a\sp{\top} x \leq \beta\sp{*}$.
\end{proof}

It follows from 
\eqref{Tam7y0notinLargerSet}
and \eqref{Tam5Sincluded}
that
$y^{*} \notin  \overline{N(y^{*}) \cap  S }$,
whereas
$y\sp{*} \in \overline{S}$.
This is a contradiction to the integral convexity of $S$,
completing the proof of Proposition~\ref{PRreconeIC}.

\subsection{Proof of Theorem~\ref{THicsetDecZ}}
\label{SCproofTHicsetDecZ}

In this section we prove Theorem~\ref{THicsetDecZ},
stating that 
every integrally convex set $S$ 
$(\subseteq \ZZ\sp{n})$
can be represented as 
$S = T + G$ 
with a bounded integrally convex set $T$ and a conic integrally convex set $G$.

By Proposition~\ref{PRdecthmP},
the convex hull $\overline{S}$ of $S$
can be represented as 
\begin{equation} \label{prf1SbarQhatC=1105}
\overline{S} = \hat{Q} + C 
\end{equation}
with a polytope $\hat{Q}$ and 
the characteristic cone $C$ of $\overline{S}$.
By Proposition~\ref{PRreconeIC},
$C \cap \ZZ\sp{n}$ is integrally convex.
With reference to the polytope $\hat{Q}$, define
\[
 l_{i}  := 
\big\lfloor \, \min \{ x_{i} \mid x \in \hat{Q} \} \,  \big\rfloor ,
\qquad
 u_{i}  := 
 \big\lceil \, \max \{ x_{i} \mid x \in \hat{Q} \} \, \big\rceil 
\]
for $i=1,2,\ldots,n$.
The numbers $l_{i}, u_{i}$ are (finite) integers
with $l_{i} \leq  u_{i}$,
since $\hat{Q}$ is a nonempty and bounded polyhedron.

Let $\{ d\sp{1},  d\sp{2}, \ldots,  d\sp{L}  \}$
be a generating set of cone $C$,
where we may assume
$d\sp{j} \in \{ -1,0,+1 \}\sp{n}$ 
by Proposition~\ref{PRreconeICgenerator}.
With reference to the number $L$ of the generators of $C$,
define a bounded integral box $B$ by 
\begin{equation*} 
 B:= \{ x \in \RR\sp{n}  \mid  l_{i} - L \leq x_{i} \leq u_{i} + L
\ (i=1,2,\ldots, n)  \}
\end{equation*}
and put $Q := \overline{S} \cap B$,
which is a bounded box-integer polyhedron containing $\hat{Q}$.
We have
$\overline{S} = Q + C$, since
$\overline{S} = \hat{Q} +  C   \subseteq Q + C \subseteq \overline{S} + C =  \overline{S}$.

Define
\begin{equation} \label{prf34QIdef=1105}
T := Q \cap \ZZ\sp{n}
= \overline{S} \cap B \cap \ZZ\sp{n} = S \cap B ,
\qquad
G := C \cap \ZZ\sp{n},
\end{equation}
which are, respectively,  a bounded integrally convex set
and a conic integrally convex set.
In the following we show $S = T + G$
by a sequence of claims.

\begin{claim}  \label{CLSsupsetQConZ=1105}
$S \supseteq T + G$.
\end{claim}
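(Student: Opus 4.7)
The plan is to take an arbitrary element $x + d \in T + G$ with $x \in T$ and $d \in G$, and show directly that $x + d \in S$. Since $T = S \cap B \subseteq S$, we have $x \in S \subseteq \overline{S}$. Since $d \in G = C \cap \ZZ\sp{n}$, the vector $d$ lies in the characteristic cone $C$ of $\overline{S}$ and is integer-valued.

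First I would invoke the basic property \eqref{repLMtam1=1005} of the characteristic cone: for every point of $\overline{S}$, adding any nonnegative multiple of an element of $C$ keeps the sum inside $\overline{S}$. Applied with $\lambda = 1$ to $x \in \overline{S}$ and $d \in C$, this yields $x + d \in \overline{S}$. Equivalently, one could cite \eqref{charconePCP} together with $S \subseteq \overline{S}$ to conclude $x + d \in \overline{S} + C = \overline{S}$.

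Next, observe that $x + d \in \ZZ\sp{n}$, since both summands are integer vectors. Because $S$ is integrally convex, it is hole-free by \eqref{icsetholefree}, i.e., $S = \overline{S} \cap \ZZ\sp{n}$. Combining these two observations gives $x + d \in \overline{S} \cap \ZZ\sp{n} = S$. Since $x \in T$ and $d \in G$ were arbitrary, this establishes $T + G \subseteq S$, which is the claim.

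There is no real obstacle here; the argument is a two-line consequence of the characteristic-cone invariance \eqref{repLMtam1=1005} together with the hole-freeness of integrally convex sets. The substantive content of the decomposition $S = T + G$ lies entirely in the reverse inclusion $S \subseteq T + G$, which presumably requires Proposition~\ref{PRconeICgenerator} (listed as an input to Theorem~\ref{THicsetDecZ} in Figure~\ref{FGprfstruct}) to show that any $x \in S$ can be pulled back into the box $B$ by subtracting an integer combination of $\{-1,0,+1\}$-valued generators of $C$ while remaining in $S$.
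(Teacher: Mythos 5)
Your argument is correct and is essentially the paper's proof in element-wise form: both reduce the claim to showing that every point of $T+G$ is an integer point of $\overline{S}$ (you via $\overline{S}+C=\overline{S}$, the paper via $\overline{T}+\overline{G}=Q+C=\overline{S}$) and then invoke $S=\overline{S}\cap\ZZ\sp{n}$. The only quibble is a citation: hole-freeness of integrally convex sets is not \eqref{icsetholefree} itself (that is the definition) but the known fact recorded in Proposition~\ref{PRintpolyIC}.
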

\begin{proof}
For two (discrete) sets
$S_{1}, S_{2} \subseteq \ZZ\sp{n}$,
in general, we have
\[ 
(\overline{ S_{1} } + \overline{ S_{2} }) \cap \ZZ\sp{n}
\supseteq  S_{1}  +  S_{2} .
\] 
Using this for $(S_{1}, S_{2}) = (T,G)$ 
as well as
$\overline{S} = Q + C = \overline{T} + \overline{G}$,
we obtain
\[
S = \overline{S} \cap \ZZ\sp{n}
= (\overline{T} + \overline{G}) \cap \ZZ\sp{n}
\supseteq  T + G.
\]
\vspace{-2\baselineskip}\\
\end{proof}

To show the reverse inclusion $S \subseteq T + G$, take any $z \in S$.
By $\overline{S} = \hat{Q} + C$ in \eqref{prf1SbarQhatC=1105}, there exist
real vectors
$\hat{x} \in \hat{Q}$ and $\hat d \in C$ 
satisfying
\[ 
 z = \hat{x} + \hat d .
\] 
The vector $\hat d$ can be represented 
as a nonnegative combination of
the generators 
$\{ d\sp{1},  d\sp{2}, \ldots,  d\sp{L}  \}$
of $C$ as
\[ 
  \hat d = \sum_{j=1}\sp{L} \lambda_{j} d\sp{j},
\qquad \lambda_{j} \geq 0 \ \ (j=1,2,\ldots, L) .
\] 
With reference to this expression, define vectors $d\sp{*}$ and $x\sp{*}$ by  
\begin{align} 
  d\sp{*} & := \sum_{j=1}\sp{L} \lfloor \lambda_{j} \rfloor \  d\sp{j},
\label{prf6ddef=1105}
\\
  x\sp{*} & := \hat{x} + 
  \sum_{j=1}\sp{L} (\lambda_{j} - \lfloor \lambda_{j} \rfloor) \  d\sp{j},
\label{prf6xdef=1105}
\end{align} 
for which we have
\begin{equation} \label{prf7zxstardstar=1105}
 x\sp{*}  + d\sp{*}  = \hat{x} + \hat d = z .
\end{equation}

\begin{claim}  \label{CLdinCI=1105}
$d\sp{*} \in G$.
\end{claim}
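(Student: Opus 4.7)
The plan is to verify the two defining conditions of $G = C \cap \ZZ\sp{n}$ separately: membership in the cone $C$ and integrality. Both follow immediately from the definition \eqref{prf6ddef=1105} of $d\sp{*}$ together with the key property, guaranteed by Proposition~\ref{PRreconeICgenerator}, that the generators $d\sp{1}, d\sp{2}, \ldots, d\sp{L}$ can be chosen from $\{-1,0,+1\}\sp{n}$.

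For the cone membership, I would note that since $\lambda_{j} \geq 0$ we have $\lfloor \lambda_{j} \rfloor \geq 0$ for each $j$ (because $\lambda_{j}$ is a nonnegative real, so its floor is a nonnegative integer). Hence $d\sp{*} = \sum_{j=1}\sp{L} \lfloor \lambda_{j} \rfloor\, d\sp{j}$ is a nonnegative linear combination of the generators of $C$, and therefore $d\sp{*} \in C$.

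For integrality, each coefficient $\lfloor \lambda_{j} \rfloor$ is an integer and each generator $d\sp{j}$ lies in $\{-1,0,+1\}\sp{n} \subseteq \ZZ\sp{n}$, so $d\sp{*} \in \ZZ\sp{n}$. Combining the two observations, $d\sp{*} \in C \cap \ZZ\sp{n} = G$, which completes the claim. There is no real obstacle here; the claim is essentially a bookkeeping step whose substance has already been absorbed into Proposition~\ref{PRreconeICgenerator}, and the companion claim asserting $x\sp{*} \in T$ (to be established next in order to conclude $z = x\sp{*} + d\sp{*} \in T + G$) is where the nontrivial work of the remaining argument will lie.
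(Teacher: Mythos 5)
Your proposal is correct and follows essentially the same two-step argument as the paper: $d\sp{*}\in C$ because it is a nonnegative combination of the generators (the paper leaves the nonnegativity of $\lfloor \lambda_{j}\rfloor$ implicit, which you spell out), and $d\sp{*}\in\ZZ\sp{n}$ because the coefficients are integers and the generators lie in $\{-1,0,+1\}\sp{n}$ by Proposition~\ref{PRreconeICgenerator}. No differences worth noting.
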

\begin{proof}
\eqref{prf6ddef=1105} shows $d\sp{*} \in C$.
We also have $d\sp{*} \in \ZZ\sp{n}$,
since 
$\lfloor \lambda_{j} \rfloor \in \ZZ$
and $d\sp{j} \in \{ -1,0,+1 \}\sp{n}$ 
for $j=1,2,\ldots, L$
by Proposition~\ref{PRreconeICgenerator}.
Therefore, $d\sp{*} \in C \cap \ZZ\sp{n} =  G$.
\end{proof}

\begin{claim}  \label{CLxinQI=1105}
$x\sp{*} \in T$.
\end{claim}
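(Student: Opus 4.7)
The plan is to verify the three conditions $x\sp{*} \in \overline{S}$, $x\sp{*} \in \ZZ\sp{n}$, and $x\sp{*} \in B$, which together yield $x\sp{*} \in \overline{S} \cap B \cap \ZZ\sp{n} = T$ by the definition \eqref{prf34QIdef=1105}.

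First, I would read off from \eqref{prf6xdef=1105} that $x\sp{*}$ is the sum of $\hat{x} \in \hat{Q}$ and the vector $\sum_{j=1}\sp{L} (\lambda_{j} - \lfloor \lambda_{j} \rfloor) d\sp{j}$, which is a nonnegative combination of the generators of $C$ and hence lies in $C$. So $x\sp{*} \in \hat{Q} + C = \overline{S}$ by \eqref{prf1SbarQhatC=1105}. Integrality comes immediately from the identity $x\sp{*} = z - d\sp{*}$ in \eqref{prf7zxstardstar=1105}, combined with $z \in S \subseteq \ZZ\sp{n}$ and $d\sp{*} \in G \subseteq \ZZ\sp{n}$ from Claim~\ref{CLdinCI=1105}.

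The remaining step is a componentwise bound. For each $i$, the inclusion $\hat{x} \in \hat{Q}$ and the definition of $l_{i}, u_{i}$ give $l_{i} \leq \hat{x}_{i} \leq u_{i}$, while the perturbation satisfies
\[
\Bigl| \sum_{j=1}\sp{L} (\lambda_{j} - \lfloor \lambda_{j} \rfloor) \, d\sp{j}_{i} \Bigr|
 \leq \sum_{j=1}\sp{L} | d\sp{j}_{i} | \leq L,
\]
because $\lambda_{j} - \lfloor \lambda_{j} \rfloor \in [0,1)$ and each $d\sp{j} \in \{-1,0,+1\}\sp{n}$ by Proposition~\ref{PRreconeICgenerator}. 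Hence $l_{i} - L \leq x\sp{*}_{i} \leq u_{i} + L$ for all $i$, so $x\sp{*} \in B$, and the claim follows.

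No step here presents a genuine obstacle: the margin of $L$ built into the definition of $B$ was introduced precisely so that rounding the coefficients $\lambda_{j}$ down to form $d\sp{*}$ in \eqref{prf6ddef=1105} cannot drive the leftover vector $x\sp{*} = z - d\sp{*}$ out of $B$, regardless of the orientations of the generators $d\sp{j}$.
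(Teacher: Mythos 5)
Your proposal is correct and follows essentially the same route as the paper: it verifies the same three conditions ($x\sp{*} \in \ZZ\sp{n}$ via $x\sp{*} = z - d\sp{*}$, $x\sp{*} \in \overline{S}$ via $\hat{Q} + C = \overline{S}$, and $x\sp{*} \in B$ via the componentwise bound using $0 \leq \lambda_{j} - \lfloor \lambda_{j} \rfloor < 1$ and $d\sp{j} \in \{-1,0,+1\}\sp{n}$). No discrepancies to report.
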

\begin{proof}
Since
$T = \overline{S} \cap B \cap \ZZ\sp{n}$
(see \eqref{prf34QIdef=1105}),
it suffices to show
(i) $x\sp{*} \in \ZZ\sp{n}$,
(ii) $x\sp{*} \in \overline{S}$, and
(iii) $x\sp{*} \in B$.
We have
$x\sp{*} \in \ZZ\sp{n}$,
since
$z \in \ZZ\sp{n}$,
$d\sp{*} \in \ZZ\sp{n}$, and
$x\sp{*}  = z - d\sp{*}$
by \eqref{prf7zxstardstar=1105}.
We have $x\sp{*} \in \overline{S}$,
since $x\sp{*} \in \hat{Q} + C$ by \eqref{prf6xdef=1105} and
$\hat{Q} + C = \overline{S}$ by \eqref{prf1SbarQhatC=1105}.
Finally,
we show $x\sp{*} \in B$.
For the first term $\hat{x}$ 
on the right-hand side of \eqref{prf6xdef=1105},
we have $l \leq \hat{x} \leq u$ since 
$\hat{x} \in \hat{Q}$.
Each component of the second term 
$ \sum_{j=1}\sp{L} (\lambda_{j} - \lfloor \lambda_{j} \rfloor) \  d\sp{j}$
lies between
$-L$ and $+L$,
since
$0  \leq \lambda_{j} - \lfloor \lambda_{j} \rfloor < 1$
and
$d\sp{j} \in \{ -1,0,+1 \}\sp{n}$
for $j=1,2,\ldots, L$
by Proposition~\ref{PRreconeICgenerator}.
Therefore, $x\sp{*} \in B$.
\end{proof}

The inclusion $S \subseteq T + G$
follows from 
Claims \ref{CLdinCI=1105} and \ref{CLxinQI=1105},
while $S \supseteq T + G$
is already shown in Claim~\ref{CLSsupsetQConZ=1105}.
This completes the proof of Theorem~\ref{THicsetDecZ}.

\subsection{Proof of Theorem \ref{THlnatmnatDecR}}
\label{SCproofTHlnatmnatDecR}

In this section we prove Theorem~\ref{THlnatmnatDecR} for polyhedra
$P$ $(\subseteq \RR\sp{n})$ 
with particular discrete convexities
such as \Lnat-convexity, \Mnat-convexity, etc.
The proof for the case (1) of \Lnat-convex polyhedra has already 
been given in Section~\ref{SCresultBI},  
right after Theorem~\ref{THlnatmnatDecR}.
Here we present a unified proof scheme
for all cases including \Lnat-convex polyhedra.
We use a generic name ``A-convex''
to mean any of 
\Lnat-convex, \LLnat-convex, \Mnat-convex, M-convex,  \MMnat-convex, \MM-convex, 
and multimodular.

The unified proof scheme is as follows.
Let $P$ be an A-convex polyhedron.
By Proposition~\ref{PRdecthmP},
we can decompose $P$ as
$P = \hat{Q} +  C$,
where $\hat{Q}$ is a polytope and 
$C$ is the characteristic cone of $P$.
We assume that
\begin{align} 
& \mbox{The characteristic cone of an A-convex polyhedron is A-convex.}
\label{prfschR1} 
\\
& \mbox{There exists a bounded A-convex polyhedron $Q$ satisfying 
$\hat{Q} \subseteq Q \subseteq P$.}
\label{prfschR2} 
\end{align}
By $\hat{Q} \subseteq Q \subseteq P$ in \eqref{prfschR2}, we have
$P = \hat{Q} +  C   \subseteq Q + C \subseteq P + C =  P$.
This shows $P = Q + C$, 
where 
$Q$ is a bounded A-convex polyhedron by \eqref{prfschR2}
and 
$C$ is an A-convex cone by \eqref{prfschR1}.

The first assumption \eqref{prfschR1}
is met by each discrete convexity in (1)--(5).
Indeed, a polyhedron $P$ with such discrete convexity
can be described as
$P = \{ x \mid Ax \leq b \}$,
where a necessary and sufficient condition on $(A,b)$ 
for that discrete convexity of $P$ is known.
For example, 
an \Lnat-convex polyhedron is described by \eqref{ineqLnat} 
and an \Mnat-convex polyhedron by \eqref{ineqMnat};
see Murota \cite{Mdcasiam}, 
Moriguchi--Murota \cite[Table 1]{MM22L2poly}, and
Murota--Tamura \cite[Table 1]{MT23ICsurv}
for other cases.
This enables us to prove that
the characteristic cone 
$C = \{ d \mid A d \leq 0 \}$
is also endowed with 
the same kind of discrete convexity.

For the second assumption \eqref{prfschR2},
we consider $Q := P \cap B$
for a bounded integral box $B$ containing $\hat{Q}$,
expecting that $Q$ is endowed with A-convexity
as a consequence of the assumed A-convexity of $P$.
This construction is indeed valid 
for all discrete convexities in question, 
with the exception of \LLnat-convexity in (2)
(see Remark~\ref{RMinterL2boxR} below).

In Case (2) of an \LLnat-convex polyhedron $P$,
we construct
an \LLnat-convex  (integer) polyhedron $Q$ as follows.
Let $P = P_{1} + P_{2}$ with two \Lnat-convex polyhedra 
$P_{1}$ and $P_{2}$.
Enumerate all vertices of the polytope $\hat{Q}$ 
as $\{ z\sp{1},z\sp{2},\ldots,z\sp{m} \}$,
where $z\sp{j} \in \RR\sp{n}$ for $j=1,2,\ldots,m$. 
By $z\sp{j} \in \hat{Q} \subseteq P_{1} + P_{2}$,
each $z\sp{j}$
can be expressed as 
$z\sp{j} = x\sp{j} + y\sp{j}$ 
with $x\sp{j} \in P_{1}$
and $y\sp{j} \in P_{2}$.
Take integral boxes $B_{1}$ and $B_{2}$ satisfying
$\{ x\sp{1},x\sp{2},\ldots,x\sp{m} \} \subseteq B_{1}$
and $\{ y\sp{1},y\sp{2},\ldots,y\sp{m} \} \subseteq B_{2}$,
respectively,
and define
$Q_{1} := P_{1} \cap B_{1}$,
$Q_{2} := P_{2} \cap B_{2}$,
and $Q := Q_{1} + Q_{2}$.
Then $Q_{1}$ and $Q_{2}$ are \Lnat-convex (integer) polyhedra,
and hence $Q$ is an \LLnat-convex  (integer) polyhedron.
Then we have
\begin{align*} 
\hat{Q} &= 
\overline{ \{ z\sp{1},z\sp{2},\ldots,z\sp{m} \} }
=
\overline{ \{ x\sp{1}+y\sp{1},x\sp{2}+y\sp{2},\ldots,x\sp{m}+y\sp{m} \} }
\\ &
\subseteq 
\overline{ \{ x\sp{1},x\sp{2},\ldots,x\sp{m} \} }
+
\overline{ \{ y\sp{1},y\sp{2},\ldots,y\sp{m} \} }
\\ & 
\subseteq 
(P_{1} \cap B_{1}) + (P_{2} \cap B_{2}) 
 = Q_{1} + Q_{2} = Q
\end{align*} 
and
$Q = Q_{1} + Q_{2} \subseteq P_{1} + P_{2} = P$.
Thus we obtain
$\hat{Q} \subseteq Q \subseteq P$.

This completes the proof of Theorem~\ref{THlnatmnatDecR}.

\begin{remark} \rm \label{RMinterL2boxR}
The intersection of an \LLnat-convex polyhedron
with an integral box is not necessarily \LLnat-convex.
For example,
let $P_{1}$ be the line segment connecting $(0, 0, 0)$ and $(1, 1, 0)$
and
$P_{2}$ be the one connecting $(0, 0, 0)$ and $(0, 1, 1)$.
Then 
$P  = P_{1} + P_{2}$
 $(\subseteq \RR\sp{3})$
is an \LLnat-convex polyhedron, which is a parallelogram 
lying on the plane $x_{1} - x_{2} + x_{3} =0$
in $\RR\sp{3}$.
For the unit box
$B = \{ x \mid 0 \leq x_{i} \leq 1 \ (i=1,2,3) \}$, 
the intersection $P \cap B$ 
is a triangle with vertices at $(0, 0, 0)$, $(0, 1, 1)$, and $(1, 1, 0)$.
This triangle is not \LLnat-convex.
\finbox
\end{remark}

\begin{remark} \rm \label{RMproofL2R}
Here is an alternative proof of Theorem~\ref{THlnatmnatDecR}(2) 
that relies on (1) for an \Lnat-convex polyhedron.
Let $P = P_{1} + P_{2}$ with \Lnat-convex polyhedra $P_{1}$ and $P_{2}$.
By (1) we have
$P_{i} = Q_{i} + C_{i}$ 
with a bounded \Lnat-convex polyhedron $Q_i$ and an \Lnat-convex cone $C_{i}$,
where $i=1,2$.
Then $P = (Q_{1} + Q_{2}) + (C_{1} + C_{2})$, where $Q_{1} + Q_{2}$ 
is a bounded \LLnat-convex polyhedron
and $C_{1} + C_{2}$ is an \LLnat-convex cone.
\finbox
\end{remark}

\subsection{Proof of Theorem~\ref{THlnatmnatDecZ}}
\label{SCproofTHlnatmnatDecZ}

In this section we prove Theorem~\ref{THlnatmnatDecZ} for discrete sets
$S$ $(\subseteq \ZZ\sp{n})$ 
with particular discrete convexities
such as \Lnat-convexity, \Mnat-convexity, etc.
The proof relies on Theorem~\ref{THicsetDecZ}
for integrally convex sets.
Just as in Section~\ref{SCproofTHlnatmnatDecR},
we present a unified proof scheme
by using a generic name ``A-convex''
to mean any of 
\Lnat-convex, \LLnat-convex, \Mnat-convex, M-convex,  \MMnat-convex, \MM-convex, 
and multimodular.

The unified proof scheme is as follows.
Let $S$ be an A-convex set.
This implies that $S$ is an integrally convex set. 
By Theorem~\ref{THicsetDecZ}
we can decompose $S$ as
$S = \hat{T} +  G$,
where $\hat{T}$ is a bounded integrally convex set and 
$G$ is a conic integrally convex set.
We have
$G = C \cap \ZZ\sp{n}$
for the characteristic cone $C$ of 
the convex hull $\overline{S}$ of $S$,
where $\overline{S}$ is an A-convex polyhedron.
We assume that
\begin{align} 
& \mbox{The characteristic cone of an A-convex polyhedron is A-convex.}
\label{prfschZ1} 
\\
& \mbox{There exists a bounded A-convex set $T$ satisfying 
$\hat{T} \subseteq T \subseteq S$.}
\label{prfschZ2} 
\end{align}
By $\hat{T} \subseteq T \subseteq S$ in \eqref{prfschZ2}, we have
$S = \hat{T} +  G   \subseteq T + G \subseteq S + G \subseteq  S$,
where the last inclusion follows from
$ \overline{S + G} = \overline{S} + \overline{G} 
= \overline{S} + C = \overline{S}$, \ 
$S + G \subseteq  \overline{S + G} \cap \ZZ\sp{n}$, \ 
and $\overline{S} \cap \ZZ\sp{n} = S$.
Therefore, $S = T + G$,
where
$T$ is a bounded A-convex set by \eqref{prfschZ2}
and 
$G$ is a conic A-convex set by \eqref{prfschZ1}.

The first assumption \eqref{prfschZ1},
which is the same as \eqref{prfschR1},
is met by each discrete convexity in (1)--(5),
as explained in the proof of Theorem \ref{THlnatmnatDecR} 
in Section~\ref{SCproofTHlnatmnatDecR}.
Recall that the inequality representations are used here.

For the second assumption \eqref{prfschZ2},
we consider $T := S\cap B$
for a bounded integral box $B$ containing $\hat{T}$,
expecting that $T$ is endowed with A-convexity
as a consequence of the assumed A-convexity of $S$.
This construction is indeed valid
for all discrete convexities in question
(see \cite{Mdcasiam,Msurvop21}),
with the exception of \LLnat-convexity in (2)
(see Remark~\ref{RMinterL2boxZ} below).

In Case (2) of an \LLnat-convex set $S$, we construct $T$ as follows.
Represent $S$ as $S = S_{1} + S_{2}$ with two \Lnat-convex sets 
$S_{1}$ and $S_{2}$.
Enumerate all members of the finite set $\hat{T}$ 
as $\hat{T} = \{ z\sp{1},z\sp{2},\ldots,z\sp{m} \}$.
Each $z\sp{j} \in \hat{T} \subseteq S = S_{1} + S_{2}$
can be expressed as 
$z\sp{j} = x\sp{j} + y\sp{j}$ 
with $x\sp{j} \in S_{1}$
and $y\sp{j} \in S_{2}$.
Take integral boxes $B_{1}$ and $B_{2}$ satisfying
$\{ x\sp{1},x\sp{2},\ldots,x\sp{m} \} \subseteq B_{1}$
and $\{ y\sp{1},y\sp{2},\ldots,y\sp{m} \} \subseteq B_{2}$,
respectively,
and define
$T_{1} := S_{1} \cap B_{1}$,
$T_{2} := S_{2} \cap B_{2}$,
and $T := T_{1} + T_{2}$.
Then $T_{1}$ and $T_{2}$ are \Lnat-convex,
and hence $T$ is \LLnat-convex.
We have
$\hat{T} \subseteq T$,
since 
$x\sp{j} \in S_{1} \cap B_{1}$ and 
$y\sp{j} \in S_{2} \cap B_{2}$
imply that
$z\sp{j} = x\sp{j} + y\sp{j} \in 
(S_{1} \cap B_{1} ) + (S_{2} \cap B_{2}) = T_{1} + T_{2} = T$.
Finally we note 
$T = T_{1} + T_{2} \subseteq S_{1} + S_{2} = S$,
to obtain $\hat{T} \subseteq T \subseteq S$.

This completes the proof of Theorem~\ref{THlnatmnatDecZ}.

\begin{remark} \rm \label{RMinterL2boxZ}
The intersection of an \LLnat-convex set 
with an integral box is not necessarily \LLnat-convex.
For example, consider an \LLnat-convex set 
$S = S_{1} + S_{2}$ 
given by two \Lnat-convex sets
$S_{1} =  \{(0, 0, 0), (1, 1, 0)\}$
and
$S_{2} = \{(0, 0, 0)$, $(0, 1, 1)\}$.
That is,
$S  = S_{1} + S_{2} = \{(0, 0, 0), \ (0, 1, 1),  \  \allowbreak   (1, 1, 0), \ (1, 2, 1)\}$.
For $B = \{ x \mid 0 \leq x_{i} \leq 1 \ (i=1,2,3) \}$ 
we have
$S \cap B = \{(0, 0, 0), \ (0, 1, 1), \  (1, 1, 0) \}$,
which is not \LLnat-convex.
\finbox
\end{remark}

\begin{remark} \rm \label{RMproofL2Z}
Here is an alternative proof of Theorem~\ref{THlnatmnatDecZ}(2) 
that relies on (1) for an \Lnat-convex set.
Let $S = S_{1} + S_{2}$ with \Lnat-convex sets $S_{1}$ and $S_{2}$.
By (1) we have
$S_{i} = T_{i} + G_{i}$ 
with a bounded \Lnat-convex set $T_i$ and a conic \Lnat-convex set $G_{i}$,
where $i=1,2$.
Then $S = (T_{1} + T_{2}) + (G_{1} + G_{2})$, where $T_{1} + T_{2}$ 
is a bounded \LLnat-convex set
and $G_{1} + G_{2}$ is a conic \LLnat-convex set.
Note that
$\overline{G_{1} + G_{2}} = 
\overline{G_{1}} + \overline{G_{2}}$ is a cone.
\finbox
\end{remark}

\section{Conclusion}
\label{SCconcl}

Our proofs given in Sections \ref{SCproofrecone}--\ref{SCproofTHicsetDecZ}
are long and primitive based on the very definition of integral convexity.
On the other hand, it is known
 (Chervet--Grappe--Robert \cite{CGR21})
that a polyhedral cone is box-integer if and only if it is box-TDI.
It is left for future investigation to find shorter or more transparent proofs,
possibly making use of this equivalence.

\bigskip

\noindent {\bf Acknowledgement}.
This work was supported by JSPS/MEXT KAKENHI JP23K11001  and JP21H04979.



\newpage
\tableofcontents

\end{document}